\numberwithin{equation}{subsection}
\begin{document}

\title{Vanishing of temperate cohomology on complex manifolds}
\author{Pierre Schapira}
\maketitle

\begin{abstract}
Consider a complex Stein manifold $X$ and a subanalytic relatively compact Stein open subset $U$ of $X$. 
We prove the vanishing   on $U$ of the holomorphic temperate cohomology. 
\end{abstract}

{\renewcommand{\thefootnote}{\mbox{}}
\footnote{Key words: temperate cohomology, subanalytic topology, Stein manifolds}
\footnote{MSC: 32L20, 14F05}
\footnote{This research was supported by the  ANR-15-CE40-0007 ``MICROLOCAL''.}
\addtocounter{footnote}{-1}
}

\tableofcontents
\section*{Introduction}

The theory of ind-sheaves and, as a byproduct, the subanalytic topology  on a real analytic manifold $M$ and  the site $\Msa$,  have been introduced in~\cite{KS01} after the construction by M.~Kashiwara in~\cite{Ka84} of the functor $\thom$ of temperate cohomology.

On the site $\Msa$ one easily obtains  various sheaves which would have no meaning in the classical setting such that the sheaves 
 of $\Cinf$-functions or distributions  with temperate growth or the sheaf of Whitney  $\Cinf$-functions. 
Notice that this topology has been  refined in~\cite{GS16} in which  the linear subanalytic topology and the site $\Msal$ were introduced but we shall not use this topology here. 

On a complex manifold $X$, by considering the Dolbeault complex with coefficients in these sheaves, we get various (derived) sheaves of tempered holomorphic functions on the site $\Xsa$ and in particular the sheaf $\Ot_\Xsa$ of  temperate holomorphic functions. 

A natural question is then to prove the  vanishing of the cohomology  of  $\Ot_\Xsa$  on a subanalytic relatively compact Stein open subset of a complex manifold. Many specialists of complex analysis consider the answer to this question as easy or well-known but we have not found any proof of it in the literature, despite the fact of its many applications (see {\em e.g.,}~\cite{Pe17}). The aim of this Note is to provide a short proof to this result, by combining the fundamental and classical vanishing theorem of H\"ormander~\cite{Ho65} 
together with~\cite{KS96}*{Th.~10.5}. 

\noindent{\bf Acknowledgments}
\rm{(i)} We warmly thank Henri Skoda who is at the origin of this paper. Indeed,  this is when discussing together of the problem that I realized that Theorem~\ref{th:main1} could be easily deduced from H\"ormander vanishing theorem. On his side, Henri will soon publish 
a paper \lp see~\cite{Sk20}\rp\, in which he obtains a similar and more precise result to this theorem with a weaker hypothesis, namely without any subanalyticity assumption. 

\spa
\rm{(ii)} We also warmly thank Daniel Barlet who explained me how the result of Lemma~\ref{le:barlet} could easily be deduced from~\cite{GR65}*{Ch.~8, Sect.~C,Th. 8}.
%\end*{acknowledgments}

\section{Review on the subanalytic site (after~\cite{KS01})} 

In this paper, $M$ denotes a real analytic manifold endowed with a distance, denoted  $\dist$, 
 locally Lipschitz equivalent to the Euclidian distance on $\R^n$.  We set 
$\dist(x,\varnothing) = D_M + 1$, where $D_M$ is the diameter of $M$. We also choose a measure $d\lambda$ on $M$ locally isomorphic to the Lebesgue measure on $\R^n$. For a relatively compact open subset $U\subset M$, we shall denote by $\vvert\cdot\vvert_{L^p}$  the $L^p$-norm ($p=1,2,\infty$) for this measure on $U$. Since $U$ is relatively compact, this norm does not depend on the choice of $d\lambda$, up to constants. 

\subsection{Sheaves}
We fix a field $\cor$. 
We denote by $\md[\cor_M]$ the abelian category of sheaves of $\cor$-modules on $M$ and by $\Derb(\cor_M)$ its bounded derived category. References for sheaf theory are made to~\cite{KS90}. 

In particular, we shall use the duality functor
\eqn
&&\RD'_M\eqdot\rhom(\scbul,\cor_M).
\eneqn
A sheaf $F$ on  $M$ is $\R$-constructible if there exists a stratification 
$M=\bigsqcup_\alpha M_\alpha$  by locally closed subanalytic subsets $M_\alpha$ such that $F\vert_{M_\alpha}$ is locally constant of finite rank. We denote by $\mdrc[\cor_M]$ the abelian category of $\R$-constructible sheaves on $M$. Its bounded derived category is equivalent to $\Derb_\Rc(\cor_M)$ the full triangulated subcategory of $\Derb(\cor_M)$ consisting of objects with $\R$-constructible cohomology. The derived categories of  $\R$-constructible sheaves satisfy the formalism of Grothendieck's six operations.

%\subsection{The subanalytic topology}
\subsection{Sheaves on the subanalytic site}

 Let us denote by $\SA(M)$ the family of subanalytic  subsets  of the real analytic manifold $M$. 
 
 This family contains that of  semi-analytic sets and satisfies:
\begin{itemize}
\item
the property of being subanalytic is local on $M$, 
\item
 $\SA(M)$ is stable by finite intersections, finite unions,  complementary in $M$,  closure, interior,
\item
a compact subanalytic set is topologically isomorphic to a finite CW-complex,
 \item 
for $f\cl M\to N$ a morphism,   $Z\in\SA(N)$ and $S\in\SA(M)$,  $\opb{f}(Z)\in\SA(M)$  and 
$f(S)\in\SA(N)$ as soon as  $f$ is proper on $\ol S$.
\end{itemize}
References are made to the pioneering work of Lojasiewicz, followed by those of Gabrielov and Hironaka and, for a modern treatment, to the paper~\cite{BM88} by E.~Bierstone and P.~Milman.

The main property of subanalytic sets are the so called Lojasiewicz inequalities which play an essential role here.

\begin{lemma}[{Lojasiewicz inequalities}]\label{le:lojaleq}
Let $U=\bigcup_{j\in J}U_j$ be a finite covering in $\Op_\Msa$. 
Then there exist a constant $C>0$ and a positive integer $N$ such that  
\eq\label{eq:lojaleq}
&&\dist(x,M\setminus U)^N\leq C\cdot(\max_{j\in J}  \dist(x,M \setminus U_j)).
\eneq
\end{lemma}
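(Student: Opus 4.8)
The plan is to deduce \eqref{eq:lojaleq} from the classical two-function form of the Lojasiewicz inequality (see~\cite{BM88}): if $K$ is a compact subanalytic set and $f,g\colon K\to\R_{\geq 0}$ are continuous subanalytic functions with $g^{-1}(0)\subseteq f^{-1}(0)$, then there exist $C>0$ and a positive integer $N$ such that $f(x)^N\leq C\,g(x)$ for all $x\in K$. First I would reduce to a compact situation: the objects of $\Op_\Msa$ being relatively compact, $K\eqdot\ol U$ is a compact subanalytic subset of $M$, while \eqref{eq:lojaleq} holds trivially when $x\notin U$ since its left-hand side then vanishes and its right-hand side is $\geq 0$. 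As every $x\in U$ belongs to $K$, it suffices to prove \eqref{eq:lojaleq} for $x\in K$.

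Next I would apply the above to the functions $f(x)\eqdot\dist(x,M\setminus U)$ and $g(x)\eqdot\max_{j\in J}\dist(x,M\setminus U_j)$ on $K$. Each $\dist(\scbul,A)$ is $1$-Lipschitz with respect to $\dist$, so $f$ and $g$ are continuous. Granting that $\dist$ may be chosen subanalytic (this is compatible with our standing hypothesis, since a subanalytic Riemannian distance is locally Lipschitz equivalent to the Euclidean one) and that $x\mapsto\dist(x,A)$ is subanalytic whenever $A$ is subanalytic, $f$ and $g$ are moreover subanalytic. The key observation is the comparison of zero sets: since $U$ and the $U_j$ are open, $M\setminus U$ and $M\setminus U_j$ are closed, so $f(x)=0$ iff $x\notin U$, whereas $g(x)=0$ iff $\dist(x,M\setminus U_j)=0$ for every $j$, i.e.\ iff $x\notin U_j$ for all $j$, i.e.\ iff $x\notin\bigcup_{j\in J}U_j=U$. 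Hence $f^{-1}(0)=g^{-1}(0)=M\setminus U$, in particular $g^{-1}(0)\subseteq f^{-1}(0)$, and the Lojasiewicz inequality produces $C>0$ and a positive integer $N$ with $f(x)^N\leq C\,g(x)$ on $K$; combined with the first step this is exactly \eqref{eq:lojaleq}.

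I expect the only point requiring care to be the subanalyticity of $\dist(\scbul,M\setminus U)$ and of the $\dist(\scbul,M\setminus U_j)$: one must choose $\dist$ subanalytic and check that the distance to a closed subanalytic set is subanalytic on the compact set $\ol U$, which can be done by covering $\ol U$ with finitely many relatively compact subanalytic charts in which the infima defining these distances are attained over bounded regions and thus reduce to distances to compact subanalytic sets. Once this is settled, the statement is a direct application of the classical Lojasiewicz inequality to the pair $(f,g)$.
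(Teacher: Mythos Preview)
The paper does not prove this lemma: it is recorded as a named classical result (``Lojasiewicz inequalities''), with the general theory of subanalytic sets and \cite{BM88} cited in the surrounding discussion. There is therefore nothing to compare against; your derivation from the two-function Lojasiewicz inequality on the compact set $\ol U$ is the standard argument and is correct.

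The only point worth tightening is the one you flag yourself. The paper's $\dist$ is merely assumed locally Lipschitz equivalent to the Euclidean distance, not subanalytic, so you cannot literally ``choose $\dist$ subanalytic'' without changing the statement. The clean fix is to observe that the conclusion \eqref{eq:lojaleq} is invariant, up to changing the constant $C$, under replacing $\dist$ by a Lipschitz-equivalent metric on a compact neighborhood of $\ol U$: for $x\in\ol U$ the infima defining $\dist(x,M\setminus U)$ and $\dist(x,M\setminus U_j)$ are realized at nearby points, so if $c_1 d\le d'\le c_2 d$ on that neighborhood then $c_1\,d(x,A)\le d'(x,A)\le c_2\,d(x,A)$ for the relevant sets $A$, and the inequality for $d'$ transfers to $d$ with $C$ replaced by $Cc_2/c_1^{N}$. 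Hence it suffices to prove \eqref{eq:lojaleq} for a single subanalytic distance (e.g.\ the Euclidean one in a chart containing $\ol U$, or a real-analytic Riemannian distance), exactly as you propose. With this remark your argument is complete.
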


The subanalytic site $\Msa$ associated to a real analytic manifold $M$ is defined as follows.
\begin{itemize}
\item
$\Op_\Msa$ is the category of relatively compact  subanalytic open subsets of $M$,
\item
the coverings are the finite coverings, meaning that a family $\{U_i\}_{i\in I}$ of objects of $\Op_\Msa$ is a covering of $U\in\Op_\Msa$ if 
$U_i\subset U$ for all $i$ and there exists a finite subset $J\subset I$ such that $\bigcup_{j\in J}U_j=U$.
\end{itemize}
Hence, we get the abelian category $\md[\cor_\Msa]$ of sheaves on $\Msa$ and its bounded derived category $\Derb(\cor_\Msa)$.

On denotes by $\rho_M\cl M\to\Msa$  the natural morphism of sites. 
As usual, one gets a pair of adjoint functors 
\eq\label{eq:oimrho}
&& \xymatrix@C=6ex
{
\opb{\rho_M}\cl\md[\cor_\Msa]\ar@<0.5ex>[r]&\md[\cor_M]\ar@<0.5ex>[l] \cl\oim{\rho_M}.
}
\eneq
The functor $\oim{\rho_M}$ is fully faithful and the functor $\opb{\rho_M}$ also admits a left adjoint:
\eq\label{eq:eimrho}
&& \xymatrix@C=6ex
{
\eim{\rho_M}\cl\md[\cor_M]\ar@<0.5ex>[r]&\md[\cor_\Msa]\ar@<0.5ex>[l] \cl\opb{\rho_M}.
}
\eneq
For $F\in \md[\cor_M]$,  $\eim{\rho_M}F$ is the sheaf associated with the presheaf $U\mapsto F(\ol U)$. Moreover, the functor $\eim{\rho_M}$ is exact.

Recall that the functor $\oim{\rho_M}$ is exact on $\mdrc[\cor_M]$ and this last category may be considered as a full thick subcategory of  $\md[\cor_M]$
as well as of  $\md[\cor_\Msa]$. Similarly, 
$\Derb_\Rc(\cor_M)$ can be considered  as a full triangulated subcategory of $\Derb(\cor_M)$ as well as of  $\Derb(\cor_\Msa)$.

We shall need the next result, already proved in~\cite{KS01}*{Cor.~4.3.7} in the more general framework of indsheaves. For the reader's convenience, we give a direct proof.
\begin{lemma}\label{le:eimrhooimf}
Let $f\cl M\to N$ be a morphism of manifolds. There is a natural isomorphism of functors
$\opb{f}\eim{\rho_N}\simeq\eim{\rho_M}\opb{f}$. 
\end{lemma}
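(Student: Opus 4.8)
\medskip
\noindent\textbf{Proof proposal.} The plan is to produce a canonical morphism of functors $\eim{\rho_M}\opb{f}\to\opb{f}\eim{\rho_N}$ from $\md[\cor_N]$ to $\md[\cor_\Msa]$, and then to check that it is an isomorphism on a family of sheaves generating $\md[\cor_N]$ under colimits. The morphism comes from two elementary facts: first, $\opb{\rho_N}\eim{\rho_N}\simeq\mathrm{id}$ (the functor $\opb{\rho_N}$ depends only on germs, and for $F\in\md[\cor_N]$ the presheaf $V\mapsto F(\ol V)$ has the same germs as $F$); second, the standard compatibility $\opb{\rho_M}\circ\opb{f}\simeq\opb{f}\circ\opb{\rho_N}$ of the inverse image functors (on the left-hand side $\opb{f}$ is taken for sheaves on the subanalytic site of $N$, on the right-hand side for usual sheaves). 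Combining these, for $F\in\md[\cor_N]$ one gets
\eqn
&&\mathrm{Hom}(\eim{\rho_M}\opb{f}F,\,\opb{f}\eim{\rho_N}F)\;\simeq\;\mathrm{Hom}(\opb{f}F,\,\opb{\rho_M}\opb{f}\eim{\rho_N}F)\;\simeq\;\mathrm{Hom}(\opb{f}F,\,\opb{f}F),
\eneqn
the first isomorphism by the adjunction $(\eim{\rho_M},\opb{\rho_M})$; I take for $\phi_F$ the morphism corresponding to $\mathrm{id}_{\opb{f}F}$, which is functorial in $F$.

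Since $\eim{\rho_M}$, $\eim{\rho_N}$ and the two inverse image functors are left adjoints, the source and the target of $\phi$ commute with small colimits; moreover every $F\in\md[\cor_N]$ is the colimit of the constant sheaves $\cor_W$ indexed by the pairs $(W,s)$ with $W$ a relatively compact subanalytic open subset of $N$ and $s\in F(W)$ (the tautological morphism being an isomorphism on germs). Hence it is enough to prove that $\phi_{\cor_W}$ is an isomorphism for such $W$.

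For such a $W$ one has $\opb{f}\cor_W=\cor_Z$ with $Z\eqdot\opb{f}(W)$ an open subanalytic subset of $M$, and the claim becomes the identification $\eim{\rho_M}\cor_Z\simeq\opb{f}\eim{\rho_N}\cor_W$. Here $\eim{\rho_M}\cor_Z$ is the sheaf on $\Msa$ associated with the presheaf $U\mapsto\Gamma(\ol U;\cor_Z)$, while, $\opb{f}$ commuting with sheafification, $\opb{f}\eim{\rho_N}\cor_W$ is the sheaf associated with the presheaf $U\mapsto\varinjlim_V\Gamma(\ol V;\cor_W)$, the colimit being taken over the subanalytic open subsets $V$ of $N$ with $f(\ol U)\subseteq V$. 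Pull-back of sections along $f$ (using $\ol U\subseteq\opb{f}(\ol V)$ when $f(\ol U)\subseteq V$) yields a natural morphism between these two presheaves, compatible with $\phi_{\cor_W}$, and the whole problem is to see that it becomes an isomorphism after sheafification. I expect this to be the only genuinely nontrivial point, and to be precisely where the Lojasiewicz inequalities of Lemma~\ref{le:lojaleq} intervene: for a finite covering $\{U_j\}_j$ of $U\in\Op_\Msa$ the descent condition bears on the sets $\ol{U_j\cap U_k}$, which in general are strictly smaller than $\ol{U_j}\cap\ol{U_k}$, and \eqref{eq:lojaleq} is exactly what allows one to refine coverings and upgrade compatibility over the former to genuine gluing over the latter. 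To keep the bookkeeping under control I would factor $f=\mathrm{pr}_N\circ\Gamma_f$ through its graph embedding $\Gamma_f\cl M\hookrightarrow M\times N$ and the projection $\mathrm{pr}_N\cl M\times N\to N$ — the asserted isomorphism being stable under composition — so that only two cases remain: for the closed embedding both inverse images are restriction functors and the comparison is direct, while for $\mathrm{pr}_N$ one argues on sections over a cofinal family of product coverings, which reduces the use of \eqref{eq:lojaleq} to its one-variable instance. I regard this last bookkeeping as the main obstacle.
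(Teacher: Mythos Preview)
Your construction of the comparison morphism via the adjunction $(\eim{\rho_M},\opb{\rho_M})$ is fine, and the reduction to generators $\cor_W$ is legitimate since all four functors involved commute with colimits. But the proposal stops short of a proof: the ``main obstacle'' you identify is never actually overcome, and more importantly your diagnosis of it is off. The Lojasiewicz inequalities play no role here whatsoever --- the lemma is purely formal, and invoking~\eqref{eq:lojaleq} would be using a sledgehammer on a door that is already open.

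The paper's argument is a two-line adjunction trick that you nearly set up yourself but then did not exploit. Both $\opb{f}\eim{\rho_N}$ and $\eim{\rho_M}\opb{f}$ are composites of left adjoints, with right adjoints $\opb{\rho_N}\oim{f}$ and $\oim{f}\opb{\rho_M}$ respectively (the $\oim{f}$ on the left is between subanalytic sites, the one on the right between ordinary topologies). By uniqueness of adjoints it therefore suffices to prove $\opb{\rho_N}\oim{f}\simeq\oim{f}\opb{\rho_M}$ as functors $\md[\cor_\Msa]\to\md[\cor_N]$. Since $\Op_\Nsa$ is a basis of the topology of $N$ and direct images commute with sheafification, one may compare the presheaf inverse images $\pshopb{\rho}$ instead; and $\pshopb{\rho}$ is literally the identity on sections over subanalytic opens, so for $V\in\Op_\Nsa$ and $F\in\md[\cor_\Msa]$ one has $(\pshopb{\rho_N}\oim{f}F)(V)=F(\opb{f}V)=(\oim{f}\pshopb{\rho_M}F)(V)$ tautologically. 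No coverings, no gluing, no graph factorization.

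The lesson: when both sides of an asserted isomorphism are visibly left adjoints, pass to the right adjoints before attempting any direct computation.
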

\begin{proof}
By adjunction, it is enough to check the isomorphism of functors 
\eqn
&&\opb{\rho_N}\oim{f}\simeq\oim{f}\opb{\rho_M}.
\eneqn
Denote  by $\pshopb{\rho_M}$ the inverse image functor for presheaves associated to  the morphism $\rho_M$ and similarly for $\rho_N$. Denote by $(\cdot)^a$ the functor which associates a sheaf to a presheaf. Since direct images commute with 
 $(\cdot)^a$ and $\Op_\Nsa$ is a basis of open subsets of $N$,  it is enough to prove the isomorphism of functors of presheaves on $\Nsa$
 \eqn
&&\pshopb{\rho_N}\oim{f}\simeq \oim{f}\pshopb{\rho_M}.
\eneqn
Let $F\in\md[\cor_\Msa]$ and let $V$ be open in $\Nsa$. Then
\eqn
&&(\pshopb{\rho_N}\oim{f}F)(V)\simeq\oim{f}F(V)\simeq F(\opb{f}V)\simeq (\pshopb{\rho_N}F)(\opb{f}V)\simeq (\oim{f}\pshopb{\rho_M}F)(V).
\eneqn
\end{proof}

Recall that one says after~\cite{GS16} that a sheaf $F$ on $\Msa$ is $\sect$-acyclic if $\rsect(U;F)$ is concentrated in degree $0$ for all $U\in\Op_\Msa$.
\begin{proposition}[{see~\cite{KS01}*{Pro.~6.4.1} and~\cite{GS16}*{Pro.~2.14} }]\label{pro:cnssheaf} 
{\rm (i)} A presheaf $F$ on $\Msa$ is a sheaf as soon as 
 $F(\varnothing)=0$ and, 
for any $U_1,U_2\in\Op_\Msa$, the sequence below is exact:
\eq\label{eq:MV2}
&&0\to F(U_1\cup U_2)\to F(U_1)\oplus F(U_2)\to F(U_1\cap U_2).
\eneq
{\rm (ii)} If moreover, for any $U_1,U_2\in\Op_\Msa$, the sequence
\eq\label{eq:MV2b}
&&0\to F(U_1\cup U_2)\to F(U_1)\oplus F(U_2)\to F(U_1\cap U_2)\to 0
 \eneq
is exact, then $F$ is $\Gamma$-acyclic.
\end{proposition}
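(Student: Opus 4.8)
The plan is to reduce both statements to their two-open-set instances, which are exactly the hypotheses, by induction on the number of members of a finite covering. Throughout one uses that $\Op_\Msa$ is stable by finite unions and finite intersections (this follows from the listed stability properties of subanalytic sets together with the relative compactness requirement); since, moreover, the coverings of $\Msa$ are by definition the families admitting a \emph{finite} subcovering, the finite coverings form a basis of the topology and it is enough, for (i), to verify the sheaf axiom for finite coverings. So let $F$ be a presheaf with $F(\varnothing)=0$ satisfying \eqref{eq:MV2}, and let $\{U_i\}_{i=1}^{n}$ be a finite covering of $U\in\Op_\Msa$. I would prove by induction on $n$ that $F$ is separated and that compatible families glue for this covering: the cases $n\le 1$ are trivial, for $n=2$ this is exactly the exactness of \eqref{eq:MV2}, and for $n\ge 3$, putting $V=U_1\cup\dots\cup U_{n-1}\in\Op_\Msa$ and $W=U_n$, one applies the induction hypothesis to the $(n-1)$-coverings $\{U_i\}_{i<n}$ of $V$ and $\{U_i\cap U_n\}_{i<n}$ of $V\cap W$, and concludes with \eqref{eq:MV2} for the two-open covering $\{V,W\}$ of $U$.

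For (ii), assume moreover \eqref{eq:MV2b}. By (i), $F$ is a sheaf, so $\check H^{0}(\mathcal U;F)=F(U)$ for every finite covering $\mathcal U$ of every $U\in\Op_\Msa$. I would next show, again by induction on the number $n$ of members of $\mathcal U$, that $\check H^{p}(\mathcal U;F)=0$ for all $p\ge 1$: for $n\le 2$ this is the surjectivity in \eqref{eq:MV2b}, and for $n\ge 3$, writing $V$ and $W=U_n$ as above, one sorts the simplices of $\mathcal U$ according to whether the index $n$ occurs so as to present the \v{C}ech complex of $\mathcal U$ as the mapping cone of a natural morphism built out of the \v{C}ech complex of the $(n-1)$-covering $\{U_i\}_{i<n}$ of $V$ and the \v{C}ech complex of the $(n-1)$-covering $\{U_i\cap U_n\}_{i<n}$ of $V\cap W$ (together with its augmentation $F(W)\to\check C^{\bullet}(\{U_i\cap U_n\}_{i<n})$); the associated long exact sequence reduces the claim to the induction hypothesis and to \eqref{eq:MV2b} for the two-open covering $\{V,W\}$ of $U$, the latter being precisely what forces $\check H^{1}(\mathcal U;F)=0$. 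Passing to the filtered colimit over finite coverings of $U$ then gives $\check H^{p}(U;F)=0$ for all $p\ge 1$ and all $U\in\Op_\Msa$.

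It then remains to pass from the vanishing of \v{C}ech cohomology to the vanishing of $R^{p}\rsect(U;F)$ for $p\ge 1$, which is exactly the $\sect$-acyclicity of $F$. I would argue by induction on $p$ using the \v{C}ech-to-derived spectral sequence $\check H^{k}(U;\mathcal H^{q}(F))\Rightarrow H^{k+q}(U;F)$ on $\Msa$, where $\check H^{k}(U;\scbul)$ is the colimit over finite coverings and $\mathcal H^{q}(F)$ is the presheaf $W\mapsto H^{q}(W;F)$. On the antidiagonal $k+q=p$, the term with $q=0$ is $\check H^{p}(U;F)=0$ by the previous step, the terms with $1\le q\le p-1$ vanish because the induction hypothesis makes $\mathcal H^{q}(F)$ the zero presheaf on $\Op_\Msa$, and the term with $q=p$, namely $\check H^{0}(U;\mathcal H^{p}(F))$, vanishes because for $p\ge 1$ the presheaf $\mathcal H^{p}(F)$ has zero associated sheaf, hence so does its plus-construction, which, being separated, is therefore $0$. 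All $E_\infty$-terms on that antidiagonal thus vanish, so $H^{p}(U;F)=0$.

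The only non-formal point is the inductive step in (i) and in (ii): reconciling the two-open-set Mayer--Vietoris sequences with coverings by $n$ open sets. This is routine simplicial bookkeeping (separatedness and glueing arguments in (i); a mapping cone decomposition of the \v{C}ech complex in (ii)), the sole geometric input being the stability of $\Op_\Msa$ under finite unions and intersections. Everything else is standard Grothendieck topology formalism: the reduction of the sheaf axiom to a basis of coverings, the exactness of filtered colimits of sheaves of $\cor$-modules, the \v{C}ech-to-cohomology spectral sequence, and the vanishing of the associated sheaf of the presheaf $W\mapsto H^{p}(W;F)$ for $p\ge 1$. (Alternatively, once (i) is available, (ii) can be obtained by taking an injective resolution of $F$, observing that injective sheaves on $\Msa$ are flabby on $\Op_\Msa$, hence satisfy \eqref{eq:MV2b}, and that this property is inherited by the syzygy sheaves, and then dimension-shifting to reduce to the vanishing of $H^{1}$, which coincides with $\check H^{1}$ and is treated as above.)
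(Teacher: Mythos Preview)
The paper does not supply its own proof of this proposition; it merely records the statement with references to \cite{KS01}*{Prop.~6.4.1} and \cite{GS16}*{Prop.~2.14}. Your argument is correct and is in essence the standard one behind those references: the finiteness of coverings on $\Msa$ reduces both the sheaf axiom and \v{C}ech acyclicity to an induction on the cardinality of the covering whose two-member case is exactly the hypothesis, and the passage from \v{C}ech to derived-functor cohomology is the classical Cartan--Leray spectral-sequence step (your alternative via dimension-shifting is equally valid).
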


\subsection{Some subanalytic sheaves on real manifolds}

\subsubsection*{Classical sheaves}
Let $M$ be a real analytic manifold. One denotes as usual by $\sha_M,  \shc^{\infty}_M, \Db_M,\shb_M$ the sheaves of complex valued real analytic functions, $C^\infty$-functions, distributions and hyperfunctions on $M$. We also denote by $\Omega_M$ the sheaf of real analytic differential forms of maximal degree, by $\ori_M$ the orientation sheaf and by $\shv_M$ the sheaf of real analytic densities, 
$\shv_M=\Omega_M\tens\ori_M$. Finally, we denote by $\shd_M$ the sheaf of finite order differential operators with coefficients in $\sha_M$.

\subsubsection*{Sheaves of temperate functions and distributions}

Let $U\in\Op_\Msa$. Set for short 
\eq\label{eq:notadist}
&&\dist_U(x)=\dist(x,M\setminus U)
\eneq
and denote as usual by $\vvert\cdot\vvert_{L^\infty}$ the sup-norm.
\bnum
\item
One says that $f\in \shc^{\infty}_M(U)$ has
{\it  polynomial growth} 
if there exists $N\geq0$ such that
\eqn
&&\vvert\dist_U(x)^N f(x)\vvert_{L^\infty}<\infty\,.
\eneqn
\item
One says that $f\in \shc^{\infty}_M(U)$ is  temperate 
 if all its derivatives (in local charts) have polynomial growth. 
 \item
One says that a distribution $u\in \Db_M(U)$ is temperate if $u$ extends as a distribution on $M$.
\enum

For  $U\in\Op_\Msa$, denote by 
\begin{itemize}
\item
$\Cinftp_M(U)$ the subspace of $\Cinf_M(U)$ consisting of temperate functions on $U$,
\item
$\Dbt_M(U)$ the space of temperate distributions on $U$.
\end{itemize}
Denote by $\Cinftp_\Msa$ and $\Dbt_{\Msa}$ the presheaves so defined.
Using  Lojasiewicz's inequalities, one  checks that 
these presheaves   are  sheaves on $\Msa$.  Moreover, $\Cinftp_\Msa$ is a sheaf of rings
and  $\Dbt_{\Msa}$  is a sheaf of  $\Cinftp_\Msa$-modules.

The next lemma below is an essential tool for our study.
\begin{lemma}[{see~\cite{Ho83}}]\label{le:hopartition}
Let $Z_0$ and $Z_1$ be two closed subanalytic subsets of $M$. There exists $\psi\in\Cinftp(M\setminus(Z_0\cap Z_1))$ such that $\psi=0$ in a neighborhood of $Z_0$ and $\psi=1$  in a neighborhood of $Z_1$.
\end{lemma}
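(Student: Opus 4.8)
The statement to prove is Lemma~\ref{le:hopartition}: given two closed subanalytic subsets $Z_0,Z_1$ of $M$, there is $\psi\in\Cinftp(M\setminus(Z_0\cap Z_1))$ with $\psi\equiv 0$ near $Z_0$ and $\psi\equiv 1$ near $Z_1$.

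The plan is to build $\psi$ explicitly as a ratio of distance functions regularized to be $\Cinf$, and to control the temperate growth by the Lojasiewicz inequalities. First I would introduce, on the open set $\Omega\eqdot M\setminus(Z_0\cap Z_1)$, the two subanalytic functions $\delta_i(x)=\dist(x,Z_i)$ for $i=0,1$. On $\Omega$ we have $\delta_0+\delta_1>0$, so the function $\varphi=\dfrac{\delta_1}{\delta_0+\delta_1}$ is well defined, continuous, subanalytic, equal to $0$ on $Z_1\cap\Omega$ and equal to $1$ on $Z_0\cap\Omega$ — exactly the wrong labels, so I would in fact take $\varphi=\dfrac{\delta_0}{\delta_0+\delta_1}$, which is $0$ near $Z_0$ and $1$ near $Z_1$. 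The denominator $\delta_0+\delta_1$ is comparable, by the Lojasiewicz inequality of Lemma~\ref{le:lojaleq} applied to a suitable finite cover of $\Omega$ (or directly to the pair of closed sets), to a power of $\dist(x,M\setminus\Omega)=\dist(x,Z_0\cap Z_1)$ from below; this is the key growth estimate.

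The function $\varphi$ is only continuous, not $\Cinf$, so next I would regularize. The standard device: compose $\varphi$ with a smooth plateau function $\chi\cl\R\to[0,1]$ with $\chi\equiv0$ near $0$ and $\chi\equiv1$ near $1$, but $\varphi$ itself is not smooth. Instead I would smooth $\varphi$ by a subanalytically controlled mollification — convolve with a bump supported in a ball of radius $c\,\dist(x,Z_0\cap Z_1)^L$ for suitable $c>0$, $L\gg0$ — so that the regularized function $\tilde\varphi$ stays between, say, $1/3$ near $Z_1$ and $2/3$ near $Z_0$ is violated... more cleanly: shrink first. Replace $Z_i$ by slightly larger closed subanalytic neighborhoods $Z_i'$ still with $Z_0'\cap Z_1'=Z_0\cap Z_1$ locally, arrange $\varphi\equiv 0$ on a subanalytic neighborhood of $Z_0$ and $\equiv 1$ on one of $Z_1$ already at the continuous level by cutting: set $\psi_0=\max(0,\min(1,3\varphi-1))$, which is continuous, identically $0$ where $\varphi\le 1/3$ (a neighborhood of $Z_0$) and identically $1$ where $\varphi\ge 2/3$ (a neighborhood of $Z_1$). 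Now mollify $\psi_0$ with a bump of radius $c\,\dist(x,Z_0\cap Z_1)^L$: the result $\psi$ is $\Cinf$, unchanged near $Z_0$ and near $Z_1$ (since $\psi_0$ is locally constant there and the mollification radius $\to 0$ faster than the distance to the bad set), and each derivative $\partial^\alpha\psi$ is bounded by $C_\alpha\,\dist(x,Z_0\cap Z_1)^{-|\alpha|L}\cdot(\text{bound on }\psi_0)$, hence has polynomial growth in $\dist_\Omega$. This gives $\psi\in\Cinftp(\Omega)$ with the required properties.

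The main obstacle is the regularization step: making the mollification radius depend subanalytically on $x$ while keeping $\psi$ genuinely $\Cinf$ and still controlling all derivatives by a single negative power of $\dist(x,Z_0\cap Z_1)$. This is precisely the kind of construction in Hörmander's book, which is why the statement is attributed to~\cite{Ho83}; I would either quote the relevant construction there verbatim or carry it out using a locally finite subanalytic partition of unity subordinate to a Whitney-type cover of $\Omega$ by balls $B(x_k,\rho_k)$ with $\rho_k\sim\dist(x_k,Z_0\cap Z_1)$, on each of which $\psi_0$ varies by a controlled amount, then patch. The Lojasiewicz inequality is what converts "radius comparable to distance to the bad set" into the required polynomial bound, so it is the one indispensable subanalytic input.
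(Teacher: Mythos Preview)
The paper does not prove this lemma at all; it is stated with a bare citation to~\cite{Ho83} and immediately used. So there is no argument in the paper to compare your sketch against --- your proposal is effectively a reconstruction of H\"ormander's construction, and as such it is correct in outline: the ratio $\varphi=\delta_0/(\delta_0+\delta_1)$, the Lojasiewicz lower bound on $\delta_0+\delta_1$ (which is exactly Lemma~\ref{le:lojaleq} applied to the two-set cover $\Omega=(M\setminus Z_0)\cup(M\setminus Z_1)$, giving $\dist_\Omega^N\le C(\delta_0+\delta_1)$), the truncation $\psi_0$, and then a variable-radius mollification.

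One point to tighten: the justification that the mollified $\psi$ is ``unchanged near $Z_0$ and $Z_1$'' is not that the mollification radius tends to zero there --- it does not, away from $Z_0\cap Z_1$ --- but that $\psi_0$ is constant on a neighborhood of $Z_i\cap\Omega$ whose width is itself $\gtrsim\dist_\Omega^N$ (from the Lipschitz bound on the $\delta_i$ and the Lojasiewicz bound on their sum), so choosing $L\ge N$ and $c$ small keeps the mollification inside that region. Your Whitney-cover alternative handles this cleanly and is how H\"ormander actually organizes the argument; quoting him directly, as the paper does, is also perfectly acceptable.
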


\begin{lemma}
Any sheaf of $\Cinftp_\Msa$-modules on $\Msa$  is  $\Gamma$-acyclic.
\end{lemma}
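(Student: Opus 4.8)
The plan is to invoke Proposition~\ref{pro:cnssheaf}(ii): to show that an arbitrary sheaf $F$ of $\Cinftp_\Msa$-modules is $\Gamma$-acyclic, it suffices to prove that for any $U_1,U_2\in\Op_\Msa$ the Mayer--Vietoris sequence
\eqn
&&0\to F(U_1\cup U_2)\to F(U_1)\oplus F(U_2)\to F(U_1\cap U_2)\to 0
\eneqn
is exact. Since $F$ is already a sheaf, the only thing missing from the sheaf axiom~\eqref{eq:MV2} is the surjectivity of the last map, i.e.\ that every section over $U_1\cap U_2$ can be written as a difference $s_1|_{U_1\cap U_2}-s_2|_{U_1\cap U_2}$ with $s_i\in F(U_i)$.

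The key tool is the $\Cinftp_\Msa$-module structure together with Lemma~\ref{le:hopartition} (partition of unity with temperate growth). Apply that lemma to the closed subanalytic sets $Z_0=M\setminus U_1$ and $Z_1=M\setminus U_2$. Note $Z_0\cap Z_1=M\setminus(U_1\cup U_2)$, so we obtain $\psi\in\Cinftp(U_1\cup U_2)$ with $\psi\equiv 0$ near $M\setminus U_1$ and $\psi\equiv 1$ near $M\setminus U_2$. The point of the growth condition is precisely that multiplication by $\psi$ (resp.\ by $1-\psi$) defines morphisms of $\Cinftp_\Msa$-modules that extend sections across the relevant boundaries: given $s\in F(U_1\cap U_2)$, the section $(1-\psi)\cdot s$ is defined a priori on $U_1\cap U_2$ but, because $1-\psi$ vanishes in a neighborhood of $M\setminus U_2$, it extends by zero to a section $s_1\in F(U_1)$; symmetrically $\psi\cdot s$ extends by zero to $-s_2\in F(U_2)$. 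Then $s_1-(-s_2)=((1-\psi)+\psi)\cdot s=s$ on $U_1\cap U_2$, giving the desired surjectivity. One should be slightly careful here: $U_1\cap U_2$ need not be covered by two pieces on which $\psi$ is locally constant, so the extension-by-zero step must be justified by the sheaf property of $F$ on the covering of $U_i$ by $U_1\cap U_2$ together with the open set where $1-\psi$ (resp.\ $\psi$) vanishes — the gluing is consistent precisely because the two local pieces agree (both are zero) on the overlap.

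The main obstacle is this extension-by-zero argument: one needs that for a section $t\in F(V)$ of a $\Cinftp_\Msa$-module and a function $\varphi\in\Cinftp(V')$ with $V\subset V'$, $\varphi\equiv 0$ near $V'\setminus V$, the section $\varphi\cdot t$ glues with the zero section to yield $\widetilde{\varphi t}\in F(V')$. This is where the subanalytic setting and Lojasiewicz inequalities enter (to guarantee that the relevant opens are in $\Op_\Msa$ and that the multiplication is well-defined at the level of the temperate sheaf of rings), and it is the only genuinely nontrivial input beyond formal sheaf-theoretic bookkeeping. Once surjectivity is established, Proposition~\ref{pro:cnssheaf}(ii) immediately yields that $F$ is $\Gamma$-acyclic.
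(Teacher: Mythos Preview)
Your proposal is correct and follows exactly the approach the paper has in mind: the paper's proof is the one-line reference ``Apply Lemma~\ref{le:hopartition} as in~\cite{GS16}*{Prop.~4.18}'', and what you have written is precisely that argument---use the temperate partition of unity from Lemma~\ref{le:hopartition} to verify the surjectivity in the Mayer--Vietoris sequence and then conclude by Proposition~\ref{pro:cnssheaf}(ii). Your caution about the extension-by-zero step is well placed; it goes through because the closure of $U_i\setminus U_j$ is a compact subanalytic set contained in the zero locus of the relevant cutoff, so one can interpose a subanalytic $\varepsilon$-neighborhood to produce the required covering in $\Op_\Msa$.
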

\begin{proof}
Apply Lemma~\ref{le:hopartition}  as in~\cite{GS16}*{Prop.~4.18}.
\end{proof}
In particular, the sheaf  $\Dbt_{\Msa}$  is $\Gamma$-acyclic.

We shall also use the sheaf
\eqn
&& \Db^{\tp \vee}_\Msa\eqdot \Dbt_\Msa\tens[\eim{\rho_M}\sha_M] \eim{\rho_M}\shv_M.
\eneqn

\subsubsection*{Sheaf of Whitney functions}
For a closed subanalytic subset $S$ in $M$, 
denote by $\cI^\infty_{M,S}$ the space
of $\mathrm{C}^\infty$-functions  defined on $M$ which vanish up to 
infinite order on $S$. 
In~\cite{KS96}, one  introduced  the sheaf:
\eqn
\C_U\wtens \Cinf_M&:=&V\longmapsto \cI^\infty_{V,V\setminus U}
\eneqn
and showed  that it  uniquely extends  to an exact functor 
\eqn
&&\scbul\wtens \shc^{\infty}_M,\quad \mdrc[\C_M]\to \md[\C_M].
\eneqn
One denotes by $\Cinfw_\Msa$ the sheaf on $\Msa$ given by 
\eqn
&&\Cinfw_\Msa(U)=\sect(M;H^0(\RD'_M\cor_U)\wtens \shc_M^{\infty}), U\in \Op_{\Msa}.
\eneqn
If $\RD'_M\C_U\simeq\C_{\ol U}$, then $\Cinfw_\Msa(U)\simeq \Cinf(M)/
\cI^\infty_{M,\ol U}$ is the space of Whitney functions on
$\ol{U}$. 
It is thus natural to call $\Cinfw_\Msa$ the sheaf of Whitney $\Cinf$-functions on $\Msa$. 

Recall that a $\C$-vector space is of type $\FN$ (resp.\ $\DFN$)  if it is a Fr{\'e}chet-nuclear space (resp.\ dual of a  Fr{\'e}chet-nuclear space).
\begin{proposition}[see~{\cite{KS96}*{Prop.~2.2}}]\label{pro:KS9622}
 Let $U\in\Op_\Msa$.
 There exist natural topologies
of type $\FN$ on $\sect(M; \C_U\wtens \Cinf_M)$ and of type
$\DFN$ on $\sect(U; \Db^{\tp \vee}_\Msa))$ and they
are dual to each other.
\end{proposition}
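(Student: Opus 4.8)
The plan is to identify each of the two spaces concretely as a closed subspace, respectively a separated quotient, of a space that is manifestly of type $\FN$, respectively $\DFN$, and then to read off the duality by dualizing a single short exact sequence of $\FN$-spaces. The only genuinely nontrivial ingredient will be an annihilator computation, and it is precisely there that the subanalyticity of $U$ intervenes, through the Lojasiewicz inequalities of Lemma~\ref{le:lojaleq} and the cutoff construction of Lemma~\ref{le:hopartition}.

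First I would unwind the definitions. By construction of the functor $\scbul\wtens\Cinf_M$ on the objects $\C_U$ with $U$ open, one has $\sect(M;\C_U\wtens\Cinf_M)=\cI^\infty_{M,M\setminus U}$, the space of $\Cinf$-functions on $M$ that vanish to infinite order along the closed subanalytic set $M\setminus U$; note that, since $U$ is relatively compact, every such function is supported in the compact set $\ol U$. This space is the common kernel of the continuous linear maps $\Cinf(M)\to\Cinf(M\setminus U)$, $f\mapsto(D^\alpha f)\vert_{M\setminus U}$ (in local charts), hence a closed subspace of the $\FN$-space $\Cinf(M)$; as closed subspaces of $\FN$-spaces are $\FN$, the induced topology makes $\sect(M;\C_U\wtens\Cinf_M)$ an $\FN$-space. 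On the other side, $\sect(U;\Db^{\tp\vee}_\Msa)$ is the space of temperate distributional densities on $U$; since $U$ is relatively compact, such a density is exactly the restriction to $U$ of a compactly supported distributional density on $M$, so restriction identifies $\sect(U;\Db^{\tp\vee}_\Msa)$ with $E/N$, where $E$ is the space of compactly supported distributional densities on $M$ and $N=\{v\in E : v\vert_U=0\}$. Here $E$ is the strong dual of the $\FN$-space $\Cinf(M)$ (the pairing being integration of a density against a function), hence of type $\DFN$, while $N$ is closed because restriction to $U$ is continuous on $E$; since separated quotients of $\DFN$-spaces are $\DFN$, this endows $\sect(U;\Db^{\tp\vee}_\Msa)$ with a natural $\DFN$-topology.

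Next I would establish the duality. Dualizing the short exact sequence of $\FN$-spaces
\eq\label{eq:whitneyses}&&0\to\cI^\infty_{M,M\setminus U}\to\Cinf(M)\to\Cinf(M)/\cI^\infty_{M,M\setminus U}\to0,\eneq
the duals being realized as spaces of compactly supported distributional densities so that $\Cinf(M)^\vee=E$, and using the standard fact that the strong dual transforms short exact sequences of $\FN$-spaces into short exact sequences of $\DFN$-spaces, one identifies $(\cI^\infty_{M,M\setminus U})^\vee$ with the $\DFN$-quotient $E/\bigl((\cI^\infty_{M,M\setminus U})^\perp\cap E\bigr)$. It therefore suffices to prove the equality
\eq\label{eq:annih}&&N=(\cI^\infty_{M,M\setminus U})^{\perp}\cap E,\eneq
for then $(\cI^\infty_{M,M\setminus U})^\vee\simeq E/N\simeq\sect(U;\Db^{\tp\vee}_\Msa)$ as topological vector spaces, which is exactly the assertion (the induced pairing being $\langle f,v\rangle=\langle f,\widetilde v\rangle$ for any lift $\widetilde v\in E$ of $v$). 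The inclusion ``$\subseteq$'' of \eqref{eq:annih} is trivial: a $\Cinf$-density with support in $U$ pairs to $0$ with any function vanishing on $U$. The reverse inclusion ``$\supseteq$'' is the substantial point: a $v\in E$ with $v\vert_U=0$, hence with support in the closed subanalytic set $M\setminus U$, must annihilate every $f\in\cI^\infty_{M,M\setminus U}$.

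This last inclusion is where I expect the real work to lie, and it is the place that uses subanalyticity. I would deduce it from the density of $\Cinf_{\mathrm c}(U)$ in $\cI^\infty_{M,M\setminus U}$ for the topology of $\Cinf(M)$: granting this, if $v\vert_U=0$ then $\langle f,v\rangle=\lim_k\langle f_k,v\rangle=0$ for any sequence $f_k\in\Cinf_{\mathrm c}(U)$ converging to $f$. The density in turn is obtained by truncation: writing $f=\chi_k f+(1-\chi_k)f$, where $\chi_k\in\Cinf(M)$ is equal to $1$ on a neighborhood of $M\setminus U$, supported in the $1/k$-neighborhood of $M\setminus U$, and satisfies $\vvert D^\gamma\chi_k\vvert_{L^\infty}\leq C_\gamma\,k^{|\gamma|}$ — cutoffs of exactly the type produced from the subanalytic geometry of $M\setminus U$ via Lemma~\ref{le:hopartition} and the control of $\dist(\cdot,M\setminus U)$ afforded by Lemma~\ref{le:lojaleq} — the flatness of $f$ along $M\setminus U$ (which forces $\vvert D^\gamma f\vvert_{L^\infty}\lesssim\dist(\cdot,M\setminus U)^N$ near $M\setminus U$, for every $N$) and the Leibniz rule give $(1-\chi_k)f\to0$ in $\Cinf(M)$, while $\chi_k f\in\Cinf_{\mathrm c}(U)$. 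Assembling \eqref{eq:annih} then completes the proof; I expect the only delicate points to be the construction and uniform estimates for the cutoffs $\chi_k$ (where subanalyticity is essential) and the verification that the dual of \eqref{eq:whitneyses} is a short exact sequence at the level of topological vector spaces, both of which are classical.
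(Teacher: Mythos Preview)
The paper does not supply its own proof of this proposition; it simply records the statement and cites~\cite{KS96}*{Prop.~2.2}.  So there is nothing to compare against, and your outline is in fact the standard argument one finds in~\cite{KS96}: realize $\cI^\infty_{M,M\setminus U}$ as a closed subspace of the $\FN$-space $\Cinf(M)$, realize temperate densities on $U$ as the quotient of $\mathcal E'(M;\shv_M)$ by the subspace supported in $M\setminus U$, and identify the annihilator.

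That said, your write-up contains several direction errors you should repair.  First, the two inclusions in~\eqref{eq:annih} are swapped.  The genuinely easy inclusion is $(\cI^\infty_{M,M\setminus U})^{\perp}\cap E\subseteq N$: since $\Cinf_{\mathrm c}(U)\subset\cI^\infty_{M,M\setminus U}$, any $v$ annihilating the latter annihilates the former, hence $v\vert_U=0$.  The inclusion that needs the density argument is $N\subseteq(\cI^\infty_{M,M\setminus U})^{\perp}$: a compactly supported distributional density with $v\vert_U=0$ kills $\Cinf_{\mathrm c}(U)$, and density then forces it to kill all of $\cI^\infty_{M,M\setminus U}$.  Your sentence labeled ``$\subseteq$ is trivial'' is garbled (``support in $U$\ldots vanishing on $U$'' should read ``support in $M\setminus U$\ldots flat on $M\setminus U$''), and your paragraph labeled ``$\supseteq$'' actually argues $\subseteq$.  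Second, in the truncation you have the roles of $\chi_k f$ and $(1-\chi_k)f$ reversed: with $\chi_k\equiv1$ near $M\setminus U$ it is $(1-\chi_k)f$ that lies in $\Cinf_{\mathrm c}(U)$ and $\chi_k f$ that tends to $0$ in $\Cinf(M)$ by flatness and Leibniz.  Finally, the density of $\Cinf_{\mathrm c}(U)$ in $\cI^\infty_{M,M\setminus U}$ does \emph{not} require subanalyticity: the cutoffs $\chi_k$ with $\vvert D^\gamma\chi_k\vvert_{L^\infty}\lesssim k^{|\gamma|}$ exist for an arbitrary closed set, and the estimate $|D^\alpha f|\lesssim\dist_U^N$ follows from Taylor's formula alone.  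Subanalyticity enters elsewhere in the paper (for the sheaf property, via Lemma~\ref{le:lojaleq}), but not in this duality.
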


Note that the sheaf $\oim{\rho_M}\shd_{M}$ does not operate on the sheaves  $\Cinft_\Msa$,
$\Dbt_\Msa$, $\Cinfw_\Msa$ but  $\eim{\rho_M}\shd_{M}$ does.

\begin{remark}
It would have been more natural to consider the cosheaf $U\mapsto\sect(M;\cor_U\wtens\shc_M^{\infty})$. We didn't since cosheaf theory is still not well-established.
\end{remark}

\subsubsection*{Sheaves of temperate $L^2$-functions}
Recall that $M$ is endowed with a measure $d\lambda$ locally equivalent to the Lebesgue measure. 
Denote  by $L^0_M$ the sheaf of measurable functions on $M$  and recall that $\vvert\cdot\vvert_{L^2}$ denotes the $L^2$-norm. Also recall notation~\eqref{eq:notadist}.
For $U$ open and relatively compact in $M$ and $s\in\R_{\geq0}$, set
\eq\label{eq:L2sU}
&&L^{2,s}(U)=\{f\in L^0_M(U); \vvert\dist_U^s(x)f(x)\vvert_{L^2}<\infty\}.
\eneq
Also set
\eq\label{eq:L2tpU}
&&L^{2,\tp}(U)\eqdot\indlim[s\geq0]L^{2,s}(U).
\eneq
Denote by $\shl^{2,\tp}_\Msa$ the presheaf $U\mapsto L^{2,\tp}(U)$ on $\Msa$. 
\begin{lemma}\label{le:L2tpshv}
\bnum
\item
The presheaf $\shl^{2,\tp}_\Msa$ is a   sheaf on $\Msa$.  
\item
The sheaf  $\shl^{2,\tp}_\Msa$  is a $\Cinftp_\Msa$-module and in particular is $\sect$-acyclic.
\enum
\end{lemma}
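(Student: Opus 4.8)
The plan is to verify the two claims of Lemma~\ref{le:L2tpshv} in turn, relying on the Lojasiewicz inequalities (Lemma~\ref{le:lojaleq}) for the sheaf property and on the partition-of-unity result (Lemma~\ref{le:hopartition}) together with Proposition~\ref{pro:cnssheaf} for the acyclicity.

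\medskip
\noindent\emph{Proof of (i).}
First I would invoke Proposition~\ref{pro:cnssheaf}~(i): it suffices to check that $\shl^{2,\tp}_\Msa(\varnothing)=0$ and that for any $U_1,U_2\in\Op_\Msa$ the sequence
\eqn
&&0\to L^{2,\tp}(U_1\cup U_2)\to L^{2,\tp}(U_1)\oplus L^{2,\tp}(U_2)\to L^{2,\tp}(U_1\cap U_2)
\eneqn
is exact. The vanishing on $\varnothing$ is clear from the convention $\dist(x,\varnothing)=D_M+1$. Left exactness at the first spot is obvious since a measurable function on $U_1\cup U_2$ vanishing on both $U_1$ and $U_2$ vanishes. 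For exactness in the middle, given $f_i\in L^{2,s_i}(U_i)$ agreeing on $U_1\cap U_2$, they glue to a measurable $f$ on $U=U_1\cup U_2$; the point is to bound $\vvert\dist_U^s f\vvert_{L^2}$ for suitable $s$. Applying~\eqref{eq:lojaleq} to the covering $U=U_1\cup U_2$ gives $C>0$ and $N$ with $\dist_U(x)^N\leq C\max_i\dist_{U_i}(x)$, hence on $U_i$ one has $\dist_U(x)^{N s_i}\leq C^{s_i}\dist_{U_i}(x)^{s_i}$ up to the bounded factor coming from the other term; choosing $s=N\max(s_1,s_2)$ (and absorbing the bounded quantities $\dist_{U_i}$, $\dist_U$ into constants since $U$ is relatively compact) yields $\dist_U^s f\in L^2(U)$, so $f\in L^{2,\tp}(U)$. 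This is the only step requiring a genuine estimate, and it is the main obstacle, though a routine one given Lojasiewicz.

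\medskip
\noindent\emph{Proof of (ii).}
To see that $\shl^{2,\tp}_\Msa$ is a $\Cinftp_\Msa$-module, note that if $\varphi\in\Cinftp_M(U)$ then $\varphi$ is bounded on the relatively compact $U$ (its zeroth derivative has polynomial growth, but $\dist_U$ is bounded, so $\varphi$ itself is bounded), hence multiplication by $\varphi$ maps $L^{2,s}(U)$ to $L^{2,s}(U)$; compatibility with restrictions is immediate, so the module structure is a morphism of sheaves on $\Msa$. For the $\sect$-acyclicity, by the lemma preceding Lemma~\ref{le:L2tpshv} in the excerpt (``Any sheaf of $\Cinftp_\Msa$-modules on $\Msa$ is $\Gamma$-acyclic'', proved via Lemma~\ref{le:hopartition}), it now suffices to have established the module structure. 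Alternatively one can argue directly via Proposition~\ref{pro:cnssheaf}~(ii): one must upgrade the sequence in (i) to a short exact sequence, i.e.\ prove surjectivity of $L^{2,\tp}(U_1)\oplus L^{2,\tp}(U_2)\to L^{2,\tp}(U_1\cap U_2)$; given $g\in L^{2,\tp}(U_1\cap U_2)$, apply Lemma~\ref{le:hopartition} with $Z_0=M\setminus U_1$, $Z_1=M\setminus U_2$ to obtain $\psi\in\Cinftp(M\setminus(Z_0\cap Z_1))=\Cinftp((U_1\cup U_2))$ that is $0$ near $M\setminus U_1$ and $1$ near $M\setminus U_2$, so that $\psi g$ extends by zero to an element of $L^{2,\tp}(U_1)$, $(1-\psi)g$ extends by zero to $L^{2,\tp}(U_2)$, and their difference restricts to $g$ on $U_1\cap U_2$; here one uses that multiplication by a temperate function preserves the $L^{2,\tp}$-spaces, exactly as above. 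Either route gives the claim.
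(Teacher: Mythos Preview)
Your argument for part~(i) is correct and matches the paper's approach: use Proposition~\ref{pro:cnssheaf}(i) together with the Lojasiewicz inequality~\eqref{eq:lojaleq} to glue along a two-set covering.

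There is, however, a genuine error in your proof of part~(ii). You assert that any $\varphi\in\Cinftp_M(U)$ is \emph{bounded} on $U$, reasoning that ``$\dist_U$ is bounded, so $\varphi$ itself is bounded''. This inference goes the wrong way. Polynomial growth means $\vvert\dist_U^N\varphi\vvert_{L^\infty}<\infty$ for some $N\geq0$, i.e.\ $|\varphi(x)|\leq C\,\dist_U(x)^{-N}$; since $\dist_U(x)\to0$ at the boundary of $U$, such $\varphi$ may well blow up there. (For instance, on $U=(0,1)\subset\R$ the function $\varphi(x)=1/x$ is temperate but unbounded.) Consequently your claim that multiplication by $\varphi$ sends $L^{2,s}(U)$ to $L^{2,s}(U)$ with the \emph{same} exponent $s$ is false.

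The paper's proof of (ii) uses exactly this growth in the correct direction: given $\theta\in\Cinftp_\Msa(U)$ and $\phi\in L^{2,s}(U)$, choose $t\geq0$ with $\vvert\dist_U^{\,t}\theta\vvert_{L^\infty}<\infty$ and observe
\[
\vvert\dist_U^{\,s+t}\,\theta\phi\vvert_{L^2}\leq \vvert\dist_U^{\,t}\theta\vvert_{L^\infty}\cdot\vvert\dist_U^{\,s}\phi\vvert_{L^2}<\infty,
\]
so $\theta\phi\in L^{2,s+t}(U)\subset L^{2,\tp}(U)$. The exponent must shift from $s$ to $s+t$. Once this module structure is correctly established, your appeal to the lemma ``any sheaf of $\Cinftp_\Msa$-modules is $\Gamma$-acyclic'' is fine. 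Note that your alternative route via Proposition~\ref{pro:cnssheaf}(ii) also invokes the multiplication property ``exactly as above'', so it inherits the same gap until you replace the boundedness claim by the exponent-shift estimate.
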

\begin{proof}
(i) follows immediately from Lojasiewicz's inequalities (Lemma~\ref{le:lojaleq}) and the fact that coverings are finite coverings.

\spa
(ii) Let $\phi\in L^{2,s}(U)$ and let $\theta\in \Cinftp_\Msa(U)$. Then 
$\vvert\dist_U(x)^t\cdot \theta(x)\vvert_{L^\infty}<\infty$ for some $t\geq0$ and we get
\eqn
&&\vvert\dist_U(x)^{t+s}\theta(x)\phi(x)\vvert_{L^2}\leq C\cdot \vvert\dist_U(x)^t\theta(x)\vvert_{L^\infty}\cdot\vvert\dist_U(x)^{s}\phi(x)\vvert_{L^2}<\infty.
\eneqn
Hence, $\theta\phi$ belongs to  $L^{2,s+t}(U)$.
\end{proof}

\begin{proposition}\label{pro:l2Ctpmod}
One has natural monomorphisms   $\Cinftp_\Msa\into \shl^{2,\tp}_\Msa\into\Dbt_\Msa$. 
\end{proposition}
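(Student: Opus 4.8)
My plan is to realise both arrows by the tautological maps and reduce the statement to a well‑definedness check. A temperate $\Cinf$‑function is in particular a measurable function, and an element of $L^{2,s}(U)\subset L^{2}_{\mathrm{loc}}(U)$ defines a distribution on $U$ via the measure $d\lambda$; so for every $U\in\Op_\Msa$ one has natural maps $\Cinftp_M(U)\to L^{2,\tp}(U)$ and $L^{2,\tp}(U)\to\Db_M(U)$, patently injective and compatible with restrictions. Since $\Cinftp_\Msa$, $\shl^{2,\tp}_\Msa$ and $\Dbt_\Msa$ are sheaves on $\Msa$ (the first and third by the argument recalled above, the second by Lemma~\ref{le:L2tpshv}), and since a morphism of sheaves is a monomorphism as soon as it is injective on the sections over every object of the site, it remains only to prove the two inclusions $\Cinftp_M(U)\subset L^{2,\tp}(U)$ and $L^{2,\tp}(U)\subset\Dbt_M(U)$.

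The first is immediate: if $\vvert\dist_U^N f\vvert_{L^\infty}<\infty$ then, $U$ being relatively compact, $\vvert\dist_U^N f\vvert_{L^2}\le\vvert 1\vvert_{L^2(U)}\,\vvert\dist_U^N f\vvert_{L^\infty}<\infty$, so $f\in L^{2,N}(U)$. For the second, take $f\in L^{2,s}(U)$. On a compact $K\subset U$ one has $\dist_U\ge c_K>0$, hence $f\in L^2(K)$, so $f\in L^{2}_{\mathrm{loc}}(U)$ and $f$ defines a distribution on $U$; the real issue is that this distribution is \emph{temperate}, i.e.\ extends to $M$. Here is the argument I would give. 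Set $N=\lceil s\rceil$ and recall the space $\cI^\infty_{M,M\setminus U}$ of $\Cinf$‑functions on $M$ vanishing to infinite order on $M\setminus U$; each such $\varphi$ is supported in $\ol U$, hence is a test function. Expanding $\varphi$ by Taylor's formula at a point of $M\setminus U$ nearest to $x$, and using that all its derivatives vanish there, one obtains a pointwise bound $\lvert\varphi(x)\rvert\le C\,\dist_U(x)^{2N}\,p_{2N}(\varphi)$ for $x\in U$, where $p_{2N}(\varphi)\eqdot\max_{\lvert\alpha\rvert\le 2N}\vvert\partial^\alpha\varphi\vvert_{L^\infty}$ and $C$ depends only on $U$ (the Taylor segments near $\partial U$ remain in a fixed compact neighbourhood of $\ol U$, and $\dist_U$ is bounded on $U$). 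Together with the Cauchy--Schwarz inequality this gives $\bigl|\int_U f\varphi\,d\lambda\bigr|\le\vvert\dist_U^N f\vvert_{L^2}\,\vvert\dist_U^{-N}\varphi\vvert_{L^2}\le C'\,\vvert\dist_U^N f\vvert_{L^2}\,p_{2N}(\varphi)$ for all $\varphi\in\cI^\infty_{M,M\setminus U}$, with $C'$ depending only on $U$. Hence $\varphi\mapsto\int_U f\varphi\,d\lambda$ is a linear form on $\cI^\infty_{M,M\setminus U}$ dominated by the restriction of a continuous seminorm of $\Cinf_c(M)$; by the Hahn--Banach theorem it extends to a continuous linear form $\widetilde f\in\Db_M(M)$, and since every $\psi\in\Cinf_c(U)$ lies in $\cI^\infty_{M,M\setminus U}$ we get $\widetilde f\vert_U=f$. (This is, in essence, the pairing of temperate distributions on $U$ with the Whitney functions on $\ol U$ that vanish on $\partial U$, cf.\ Proposition~\ref{pro:KS9622}.)

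The only delicate step is this extension. Since the growth of $f$ is controlled merely in the $L^2$‑sense, the integral $\int_U\lvert f\varphi\rvert\,d\lambda$ typically diverges for a test function $\varphi$ not vanishing on $\partial U$, so the zero extension of $f$ need not be a distribution on $M$; the point is to renormalise, which is achieved by pairing only against functions vanishing to order $2N$ on $\partial U$ and then applying Hahn--Banach. The quantitative core is the Taylor estimate above, and what really has to be watched is that its constant be uniform in the support of $\varphi$ — and this is exactly where the relative compactness of $U$, hence the boundedness of $\dist_U$ and of the $\Cinf$‑seminorms of $\varphi$ on $U$, is used.
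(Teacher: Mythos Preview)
Your argument is correct. For the first inclusion $\Cinftp_\Msa\hookrightarrow\shl^{2,\tp}_\Msa$ you give exactly the paper's proof: $L^\infty$ control of $\dist_U^N f$ on a set of finite measure gives $L^2$ control.

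For the second inclusion $\shl^{2,\tp}_\Msa\hookrightarrow\Dbt_\Msa$ the paper simply writes ``obvious'' and moves on, whereas you supply an actual proof: the Taylor estimate $|\varphi(x)|\le C\,\dist_U(x)^{2N}p_{2N}(\varphi)$ for $\varphi\in\cI^\infty_{M,M\setminus U}$, Cauchy--Schwarz, and Hahn--Banach to produce the extension $\tilde f\in\Db_M(M)$. This is a perfectly good (and classical) way to see that polynomial $L^2$-blow-up at the boundary still yields a temperate distribution; it makes explicit the duality with Whitney functions that you cite at the end, and is presumably what the author had in mind as the ``obvious'' fact. The only cosmetic point is that the Taylor step is stated globally on $M$ while it is really a local estimate in charts near $\partial U$; you already flag this (``the Taylor segments near $\partial U$ remain in a fixed compact neighbourhood of $\ol U$''), and a finite cover of $\ol U$ by charts makes the constant $C$ uniform, so nothing is missing.
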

\begin{proof}
The monomorphism $ \shl^{2,\tp}_\Msa\into\Dbt_\Msa$ is obvious.

\spa
(ii) Let $U\in\Op_\Msa$ and  let $\phi\in \Cinftp_\Msa(U)$.
There exists $t\in\R_{\geq0}$ such that 
\eqn
&&\vvert\dist_U(x)^t\phi(x)\vvert_{L^\infty}<\infty.
\eneqn
Hence, we have for some constant $C>0$
\eqn
&&\vvert\dist_U(x)^t\phi(x)\vvert_{L^2}\leq C\cdot \vvert\dist_U(x)^t\phi(x)\vvert_{L^\infty}<\infty
\eneqn
\end{proof}

\subsection{Some subanalytic sheaves on complex manifolds}

\subsubsection*{Temperate holomorphic functions}
Now let $X$ be a {\em complex} manifold of complex dimension $n$. 
One defines the (derived) sheaf of temperate holomorphic functions 
$\Ot_\Xsa\in \Derb(\C_\Xsa)$ as the Dolbeault complex with coefficients  in  
$\Cinftp_{\Xsa}$.
In other words
\eq\label{eq:dolbOt1}
\Ot_\Xsa&\eqdot&0\to \shc_\Xsa^{\infty,\tp (0,0)}\to[\ol\partial]\cdots\to[\ol\partial]\shc_\Xsa^{\infty,\tp (0,n)}\to0.
\eneq
If $n>1$, this object is no more concentrated in degree $0$. 

Also consider the object $\tw\OO^\tp_\Xsa\in \Derb(\C_\Xsa)$
\eq\label{eq:dolbOt3}
\tw\OO^\tp_\Xsa&\eqdot&0\to \Db_\Xsa^{\tp(0,0)}\to[\ol\partial]\cdots\to[\ol\partial]\Db_\Xsa^{\tp(0,n)}\to0.
\eneq

\begin{theorem}[{see~\cite{KS96}*{Th.~10.5}}]\label{th:ks96}
The natural morphism $\Ot_\Xsa\to\tw\OO^\tp_\Xsa$ is an isomorphism in $\Derb(\C_\Xsa)$. 
\end{theorem}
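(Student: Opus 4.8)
The plan is to reduce the isomorphism in $\Derb(\C_\Xsa)$ to a statement about sections over each object of $\Op_\Xsa$, and then to a local \emph{temperate $\ol\partial$\nobreakdash-Poincar\'e lemma with $L^{2}$ bounds}, resting on H\"ormander's vanishing theorem~\cite{Ho65} and the \L ojasiewicz inequalities (Lemma~\ref{le:lojaleq}). First, the objects $\Ot_\Xsa$ and $\tw\OO^\tp_\Xsa$ are represented by the Dolbeault complexes \eqref{eq:dolbOt1} and \eqref{eq:dolbOt3}, whose terms are sheaves of $\Cinftp_\Xsa$\nobreakdash-modules, hence $\sect$\nobreakdash-acyclic; the same holds for the mapping cone $C^\bullet$ of the morphism. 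Thus $\rsect(U;C^\bullet)$ is the section complex $\sect(U;C^\bullet)$, and since $\mathcal H^{j}(C^\bullet)$ is the sheaf associated with the presheaf $U\mapsto H^{j}(\sect(U;C^\bullet))$, it is enough to prove that $\sect(U;C^\bullet)$ is acyclic for every $U\in\Op_\Xsa$; equivalently, that $\rsect(U;\Ot_\Xsa)\to\rsect(U;\tw\OO^\tp_\Xsa)$ is an isomorphism for every such $U$. (For $n>1$ the object $\Ot_\Xsa$ is genuinely derived, so this is a comparison of possibly non-trivial higher cohomologies, not a concentration statement.)

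Both section complexes compute temperate Dolbeault cohomology of $U$ with, respectively, $C^\infty$ and distribution coefficients. As usual one proves such a comparison by showing that the two complexes of sheaves on $\Xsa$ are quasi-isomorphic locally on $\Xsa$ and then patching, which is legitimate because the sheaves are $\sect$\nobreakdash-acyclic and coverings in $\Xsa$ are finite (Proposition~\ref{pro:cnssheaf}, Lemma~\ref{le:hopartition}). The local content is twofold: (a) on a member $W$ of a finite subanalytic covering of $U$, every $\ol\partial$\nobreakdash-closed form of bidegree $(0,q)$ with temperate ($L^{2,\tp}$) coefficients, $q\geq1$, is $\ol\partial v$ for some $v\in L^{2,\tp}(W)$ (here the chain $\Cinftp_\Xsa\into\shl^{2,\tp}_\Xsa\into\Dbt_\Xsa$ of Proposition~\ref{pro:l2Ctpmod} is used to bring in the $L^{2}$ machinery); and (b) a temperate-distribution solution of $\ol\partial w=v$ with $v$ temperate $C^\infty$ can be replaced, modulo a $\ol\partial$\nobreakdash-closed term treated by descending induction on $q$, by a temperate $C^\infty$ solution. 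Statement (b) is elliptic regularity for the Dolbeault Laplacian combined with interior estimates on balls of radius comparable to $\dist_U$, which propagate polynomial growth of the data to polynomial growth of every derivative of the solution; it is essentially formal once (a) is known.

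The analytic core is statement (a). Away from $\partial U$ it is H\"ormander's weighted $L^{2}$ estimate on a small polydisc $D$, with the strictly plurisubharmonic weight $-(N+1)\log\dist(\,\cdot\,,\partial D)+|z|^{2}$. The crucial case is near a point $x$ of $\partial U$: one must solve $\ol\partial w=v$ on $W\cap U$ with $\vvert\dist_U^{s}\,w\vvert_{L^2}$ controlled by $\vvert\dist_U^{s'}\,v\vvert_{L^2}$. This is exactly where subanalyticity enters: by the \L ojasiewicz inequalities one can, on a pseudoconvex piece, manufacture a plurisubharmonic weight comparable to $-N\log\dist_U$ up to a polynomial error, and compare $\dist_U$ with the distance functions of a subanalytic covering, so that H\"ormander's theorem~\cite{Ho65} yields a solution of temperate growth. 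One may first normalize the situation by a local modification bringing $\partial U$ to normal crossings, after which $\dist_U$ is, up to bounded factors, a monomial and the weights become explicit.

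\textbf{Main obstacle.} Everything hard is concentrated in the boundary case of the last step --- constructing the plurisubharmonic weights and showing that solving $\ol\partial$ loses at most a polynomial factor in $\dist_U$. This is the only point where the subanalyticity hypothesis is essential: the \L ojasiewicz inequalities, false for general open sets, are precisely what turns ``$U$ subanalytic'' into the polynomial comparability of distance functions needed to keep $\ol\partial$\nobreakdash-solutions temperate. By contrast, the homological reduction of the first step is bookkeeping and the passage from distributional to $C^\infty$ solutions is standard elliptic regularity.
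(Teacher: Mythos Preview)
The paper's own ``proof'' is essentially a citation: one observes that the terms of both Dolbeault complexes are $\Gamma$-acyclic, so $\rsect(U;\scbul)$ is computed by sections, and then one invokes \cite{KS96}*{Th.~10.5} for the case $X=\C^n$, which suffices because the statement is local on $\Xsa$. The argument in \cite{KS96} is a \emph{regularity} argument (temperate distributional $\ol\partial$-data admit temperate $C^\infty$ primitives), carried out with explicit integral operators; H\"ormander's $L^2$ existence theorem is not used there at all.

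Your sketch takes a genuinely different route and, in doing so, conflates this comparison theorem with the paper's later vanishing theorem (Theorem~\ref{th:main1}). Your step~(a) --- solving $\ol\partial$ with temperate $L^2$ bounds via H\"ormander and plurisubharmonic weights --- is an \emph{existence} statement requiring the piece $W$ to be pseudoconvex; it is exactly the content of Corollary~\ref{cor:horm}. But Theorem~\ref{th:ks96} is a \emph{comparison} statement that must hold for every $U\in\Op_\Xsa$, Stein or not: on non-Stein $U$ both Dolbeault complexes have nonzero higher cohomology, and the assertion is only that these cohomologies agree, not that they vanish. The correct tool for that is your step~(b), elliptic regularity with temperate control, which you treat as secondary; step~(a) is not needed for the comparison. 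In the paper's architecture the two ingredients are deliberately kept apart: Theorem~\ref{th:ks96} furnishes the isomorphism $\beta\circ\alpha$ in the proof of Theorem~\ref{th:main1}, while H\"ormander (Corollary~\ref{cor:horm}) furnishes the concentration of the intermediate object $\OO^{2,\tp}_\Xsa$. Your sketch fuses the two and effectively re-derives Theorem~\ref{th:main1} inside the proof of Theorem~\ref{th:ks96}.

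Your strategy can be repaired, but then most of the analysis you highlight becomes superfluous. Since polydiscs form a cofinal system of subanalytic neighborhoods of each point and $\opb{\rho_X}$ is conservative, it suffices to show that on every polydisc $W$ both section complexes are exact in positive degrees; the vanishing of the cohomology sheaves of the cone then follows, and $\Gamma$-acyclicity gives the global statement automatically --- no ``patching'' argument is required. On a polydisc your (a) and (b) indeed yield this. But then the discussion of weights near a general subanalytic boundary, resolution to normal crossings, and \L ojasiewicz comparisons of $\dist_U$ with $\dist_{U_j}$ is beside the point: the only boundary in play is that of a polydisc. Relatedly, your sentence ``it is enough to prove that $\sect(U;C^\bullet)$ is acyclic for every $U\in\Op_\Xsa$'' overstates what you actually argue and what is needed; acyclicity on a cofinal family suffices, and that is all your local analysis can deliver.
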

\begin{proof}
Let $U\in \Op_\Xsa$. The sheaves $\Cinftp_\Xsa$ and $\Dbt_\Xsa$ being $\sect$-acyclic, 
$\rsect(U;\Ot_\Xsa)$ and $\rsect(U;{\tw\OO}^\tp_\Xsa)$ are represented by the complexes
\eqn\label{eq:dolbOt1U}
\rsect(U;\Ot_\Xsa)&\cl&0\to \shc_\Xsa^{\infty,\tp (0,0)}(U)\to[\ol\partial]\cdots\to[\ol\partial]\shc_\Xsa^{\infty,\tp (0,n)}(U)\to0,\\
\rsect(U;\tw\OO^\tp_\Xsa)&\cl&0\to \Db_\Xsa^{\tp(0,0)}(U)\to[\ol\partial]\cdots\to[\ol\partial]\Db_\Xsa^{\tp(0,n)}(U)\to0.
\eneqn
When $X=\C^n$, it is proved in~\cite{KS96}*{Th.~10.5} that these two complexes are quasi-isomorphic and this is enough for our purpose  since the statement is of local nature.
\end{proof}

\subsubsection*{Whitney  holomorphic functions}
One also defines the (derived) sheaf of Whitney  holomorphic functions by taking the 
Dolbeault complex of the sheaf  $\Cinfw_\Msa$ 
\eq\label{eq:dolbOt1}
\Ow_\Xsa&\eqdot&0\to \shc_\Xsa^{\infty, {\rm w}(0,0)}\to[\ol\partial]\cdots\to[\ol\partial]\shc_\Xsa^{\infty,{\rm w} (0,n)}\to0.
\eneq

Following~\cite{KS96}, we shall use the quasi-abelian  categories of 
$\FN$ or $\DFN$   spaces (see~\cite{Sn99}). 
The  topological duality functor induces an equivalence 
of triangulated categories
\eqn
&&\Derb(\FN)^\rop\simeq \Derb(\DFN).
\eneqn

By applying Proposition~\ref{pro:KS9622}, one gets:
\begin{proposition}\label{pro:KSduality}
Let $U\in\Op_\Xsa$. 
The two objects $\rsect(U;\Ow_\Xsa)$ and $\rsect(U;\Ot_\Xsa)$ are well-defined in the categories 
$\Derb(\FN)$ and $\Derb(\DFN)$ respectively, and are dual to each other.
\end{proposition}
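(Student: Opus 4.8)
The plan is to reduce the statement to the algebraic duality between $\FN$ and $\DFN$ complexes already set up in the excerpt. First I would fix $U\in\Op_\Xsa$ and observe that, because $\Cinfw_\Msa$ and $\Db^{\tp\vee}_\Msa$ (more precisely the sheaves $\Cinf_\Xsa^{\infty,{\rm w}(0,q)}$ and $\Db^{\tp(0,q)}_\Xsa$, suitably twisted) are $\Gamma$-acyclic — the former by the Whitney-function analogue of the argument via Lemma~\ref{le:hopartition}, the latter by Lemma~\ref{le:L2tpshv} and the fact that sheaves of $\Cinftp_\Msa$-modules are $\Gamma$-acyclic — the objects $\rsect(U;\Ow_\Xsa)$ and $\rsect(U;\Ot_\Xsa)$ are represented by the honest Dolbeault complexes of global sections on $U$ (or on $X$, for the Whitney side). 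Thus each is a bounded complex of $\C$-vector spaces living degreewise in $\FN$, resp.\ $\DFN$, so they define objects of $\Derb(\FN)$ and $\Derb(\DFN)$; this is the "well-defined" half of the claim.

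Next I would invoke Proposition~\ref{pro:KS9622}: for each Dolbeault degree $q$, the space $\sect(X;\C_U\wtens\Cinf_X^{(0,q)})$ carries a natural $\FN$-topology and $\sect(U;\Db^{\tp\vee,(0,q)}_\Xsa)$ a natural $\DFN$-topology, and the two are topologically dual. One must check that these dualities are compatible with the $\overline\partial$-differentials, i.e.\ that the transpose of $\overline\partial$ on the Whitney complex is (up to sign) $\overline\partial$ on the temperate distribution complex. This is the usual integration-by-parts / Stokes pairing between a Whitney form and a temperate current on $\overline U$, and it is continuous for the stated topologies precisely because Whitney functions vanish to infinite order on $X\setminus U$ while the distributions are temperate along the boundary, so no boundary term survives. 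Granting this, the pair of complexes is a complex of $\FN$-spaces together with its termwise topological dual complex of $\DFN$-spaces.

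Finally I would apply the equivalence $\Derb(\FN)^\rop\simeq\Derb(\DFN)$ recorded just above the statement: the duality functor sends a bounded complex of $\FN$-spaces to its dual complex of $\DFN$-spaces, and under this equivalence $\rsect(U;\Ow_\Xsa)$ goes to $\rsect(U;\Ot_\Xsa)$, which is exactly the assertion that the two are dual to each other. The main obstacle I anticipate is not the formal derived-categorical manipulation but the bookkeeping in the middle step: one has to make sure that the degreewise topological duality of Proposition~\ref{pro:KS9622} is genuinely a duality of complexes (differentials adjoint to each other, no boundary contributions, and the $\Omega_X\otimes\ori_X$ twist inserted so that the Dolbeault pairing has the right bidegree), and that the resulting complex of $\FN$-spaces has strict differentials or is at least quasi-isomorphic to one with strict differentials so that the passage to $\Derb(\FN)$ is harmless — here one leans on the quasi-abelian formalism of~\cite{Sn99}. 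Once that is in place, the conclusion is immediate.
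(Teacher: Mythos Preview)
Your approach is essentially the same as the paper's: both reduce to the termwise $\FN$/$\DFN$ duality of Proposition~\ref{pro:KS9622} applied to the Dolbeault resolutions, combined with the quasi-abelian duality $\Derb(\FN)^\rop\simeq\Derb(\DFN)$; the paper in fact gives no further argument beyond invoking that proposition and citing~\cite{KS96}*{Th.~6.1} for the general statement. One small slip: the $\Gamma$-acyclicity of $\Db^{\tp}_\Xsa$ comes from the unnumbered lemma that $\Cinftp_\Msa$-modules are $\Gamma$-acyclic, not from Lemma~\ref{le:L2tpshv} (which concerns $\shl^{2,\tp}_\Msa$).
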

See~\cite[Theorem 6.1]{KS96} for a more general statement.

\begin{example}\label{exa:OtOwCDb}
(i) Let $Z$ be a closed complex analytic subset of the complex manifold $X$. We have the isomorphisms in $\Derb(\D_X)$:
\eqn
&&
\ba{ll}
%\opb{\rho_M}\rhom[{\C_\Xsa}](\RD_X'\C_Z,\Oww_\Xsa)\simeq(\OO_X)_Z&\mbox{(restriction)},\\[1ex]
\opb{\rho_X}\rhom[{\C_\Xsa}](\RD_X'\C_Z,\Ow_\Xsa)\simeq\OO_X\widehat{\vert}_Z&
\mbox{(formal completion)},\\[1ex]
\opb{\rho_X}\rhom[{\C_\Xsa}](\C_Z,\Ot_\Xsa)\simeq\rsect_{[Z]}(\OO_X)&\mbox{(algebraic cohomology)}. %,\\[1ex]
%\opb{\rho_X}\rhom[{\C_\Xsa}](\C_Z,\OO_\Xsa)\simeq\rsect_{Z}(\OO_X)&\mbox{(local cohomology)}.
\ea
\eneqn
(ii) Let  $M$ be a real analytic manifold such that $X$ is a complexification of $M$. We have the isomorphisms  in $\Derb(\D_M)$:
\eqn
&&\ba{ll}
%\opb{\rho_X}\rhom[{\C_\Xsa}](\RD_X'\C_M,\Oww_\Xsa)\vert_M\simeq \rA_M &\mbox{(real analytic functions)},
%\\[1ex]
\opb{\rho_X}\rhom[{\C_\Xsa}](\RD_X'\C_M,\Ow_\Xsa)\vert_M\simeq \Cinf_M&\mbox{($\mathrm{C}^\infty$-functions)},
\\[1ex]
\opb{\rho_X}\rhom[{\C_\Xsa}](\RD_X'\C_M,\Ot_\Xsa)\vert_M\simeq\Db_M&\mbox{(distributions)}. %,\\[1ex]
%\opb{\rho_X}\rhom[{\C_\Xsa}](\RD_X'\C_M,\OO_\Xsa)\vert_M\simeq\shb_M&\mbox{(hyperfunctions)}.
\ea
\eneqn
\end{example}

\section{The vanishing theorem}

\subsection{Stein subanalytic sets}

The next lemmas will be useful in the sequel.

Recall that a compact subset $K$ of a complex manifold is said to be Stein if $K$ admits a fundamental neighborhood system consisting of Stein open subsets. 

\begin{lemma}\label{le:obv1}
Let $X$ be a Stein manifold and let $K$ be a compact subset. Then there exist an open  Stein subanalytic subset $U$ of $X$ 
and a  compact Stein subanalytic subset $L$ of $X$  with $K\subset L\subset U$. 
\end{lemma}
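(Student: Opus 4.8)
The plan is to reduce the statement to an explicit situation in an affine space. By Remmert's embedding theorem, $X$ admits a closed holomorphic embedding into $\C^N$ for some $N$; I identify $X$ with its image, so that $X$ is a closed complex submanifold of $\C^N=\R^{2N}$. Two observations will be used freely. First, $X$ is a complex analytic subset of $\C^N$, hence semianalytic, and every open or closed ball $B_R\eqdot\{z\in\C^N; |z|<R\}$ (resp.\ $\ol{B_R}$) is semianalytic as well; consequently $X\cap B_R$ and $X\cap\ol{B_R}$ are semianalytic in $\R^{2N}$, and a straightforward check in local holomorphic charts of $\C^N$ adapted to $X$ shows that they are semianalytic, in particular subanalytic, as subsets of the real analytic manifold $X$. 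Second, a closed complex submanifold of a Stein manifold is again Stein, and every open ball $B_R$ is Stein.

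Next I would make the construction. Since $K$ is compact, choose $R_1>0$ with $K\subset B_{R_1}$, and fix any $R_2>R_1$. Set $U\eqdot X\cap B_{R_2}$ and $L\eqdot X\cap\ol{B_{R_1}}$, so that $K\subset L\subset U$. By the first observation, $U$ is an open subanalytic subset of $X$ and $L$ is a subanalytic subset of $X$; moreover $\ol U\subset X\cap\ol{B_{R_2}}$, which is compact because $X$ is closed in $\C^N$, so $U$ is relatively compact and $L$ is compact. By the second observation, $U$ is Stein, being a closed complex submanifold of the Stein ball $B_{R_2}$; likewise each $X\cap B_{R_1+\varepsilon}$ ($\varepsilon>0$) is a Stein open neighborhood of $L$ in $X$.

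It remains only to check that $L$ is Stein in the sense of the paper, i.e.\ that the Stein open sets $\{X\cap B_{R_1+\varepsilon}\}_{\varepsilon>0}$ form a \emph{fundamental} system of neighborhoods of $L$ in $X$; this is the one point calling for an argument, and it is a routine compactness argument. Let $V\subset X$ be open with $L\subset V$. If $X\cap B_{R_1+1/j}\not\subset V$ for every $j\geq1$, pick $x_j\in(X\cap B_{R_1+1/j})\setminus V$; the sequence $(x_j)$ lies in the compact set $X\cap\ol{B_{R_1+1}}$, so some subsequence converges to a point $x_\infty\in X$ with $|x_\infty|\leq R_1$, whence $x_\infty\in X\cap\ol{B_{R_1}}=L\subset V$. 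Since $V$ is open, this forces $x_j\in V$ for $j$ large, a contradiction. Hence some $X\cap B_{R_1+\varepsilon}$ is contained in $V$, $L$ is Stein, and the lemma follows. (One could equally run the same argument with the sublevel sets of the real analytic strictly plurisubharmonic exhaustion $z\mapsto|z|^2$ of $X$ in place of the sets $X\cap B_R$; this amounts to the same construction.)
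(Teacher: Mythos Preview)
Your proof is correct and follows exactly the same approach as the paper: embed $X$ into $\C^N$ via Remmert's theorem and intersect with an open, respectively closed, ball containing $K$. The paper states this in two lines; you have simply spelled out the verifications (subanalyticity in $X$, Steinness of $U$ and of the compact $L$ via a fundamental system of Stein neighborhoods) that the paper leaves implicit.
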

\begin{proof}
One embeds $X$ as a  closed smooth complex submanifold of $\C^N$ for some $N$. Then choose for $U$ the intersection of $X$ with an open ball which contains $K$ and similarly choose a closed ball for $L$. 
\end{proof}
\begin{lemma}\label{le:obv2}
Let $X$ be a complex manifold and let $K$ be a Stein compact subset. Then there exists a fundamental neighborhood  system of $K$ 
consisting of open Stein subanalytic subsets as well a a fundamental neighborhood  system of $K$ consisting of compact Stein subanalytic subsets. 
\end{lemma}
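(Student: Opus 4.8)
The plan is to localize Lemma~\ref{le:obv1}, which treats a compact subset of a \emph{Stein} manifold, to the present situation where only $K$ is assumed Stein. First I would use the hypothesis that $K$ is a Stein compact subset: it provides a fundamental neighborhood system of $K$ consisting of Stein open subsets of $X$. Hence it suffices to show that every Stein open subset $W\subset X$ with $K\subset W$ contains an open Stein subanalytic subset $U$ of $X$ and a compact Stein subanalytic subset $L$ of $X$ with $K$ in the interior of $L$ and $L\subset U\subset W$: letting $W$ range over this neighborhood system then produces both required fundamental systems at once, since $U\subset W$ and $L\subset W$.

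Next I would apply Lemma~\ref{le:obv1} with the ambient Stein manifold taken to be $W$ itself and the compact subset taken to be $K$. This yields an open subset $U$ and a compact subset $L$ of $W$, both Stein and both subanalytic \emph{in $W$}, with $K\subset L\subset U\subset W$ (being Stein is an intrinsic property of these subsets, unaffected by the change of ambient manifold, and a fundamental neighborhood system of $L$ in $W$ is also one in $X$). Inspecting the proof of Lemma~\ref{le:obv1} — in which $W$ is realized as a closed complex submanifold $\iota(W)$ of some $\C^N$ and $U$, $L$ are the preimages by $\iota$ of $\iota(W)\cap B_U$ and $\iota(W)\cap\ol{B_L}$ for suitably nested balls $B_L\subset\ol{B_L}\subset B_U$ of $\C^N$ with $\iota(K)\subset B_L$ — I would record two extra facts: that $K$ lies in the interior of $L$ (it is contained in the open subset $\iota^{-1}(\iota(W)\cap B_L)$ of $W$, which is contained in $L$), and that $U$ is relatively compact in $W$ (its closure in $W$ is contained in $\iota^{-1}(\iota(W)\cap\ol{B_U})$, which is compact because $\iota(W)$ is closed in $\C^N$).

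The only genuine point is then to upgrade subanalyticity \emph{in $W$} to subanalyticity \emph{in $X$} and to check $U\in\Op_\Xsa$. Here I would argue as follows: the closure $\ol U$ of $U$ taken in $W$ is compact, hence closed in $X$, so it is also the closure of $U$ in $X$ and it is a compact subset of $W$; in particular $U$ is relatively compact in $X$. Since $W$ is open in $X$ and subanalyticity is a local property, $U$ is subanalytic at every point of $W$; and at any point $x\in X\setminus W$ one has $x\notin\ol U\subset W$, so $U$ is empty near $x$ and hence trivially subanalytic there. Thus $U$ is a subanalytic subset of $X$, and the same argument, with $\ol L=L$, applies to $L$. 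Everything else being an assembly of standard facts about Stein spaces, I expect this locality step to be the main (and in fact only) obstacle, although it too is routine.
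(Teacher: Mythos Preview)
Your proof is correct and follows exactly the paper's approach: pick a Stein open neighborhood $W$ of $K$ and apply Lemma~\ref{le:obv1} inside $W$. The paper's proof is the single sentence ``Choose a Stein open neighborhood $W$ of $K$ and apply Lemma~\ref{le:obv1}'', and you have simply unpacked the details it leaves to the reader --- in particular the passage of subanalyticity from $W$ to $X$ via locality and relative compactness, and the fact that $K$ lies in the interior of $L$.
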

\begin{proof}
Choose a Stein open neighborhood $W$ of $K$ and apply Lemma~\ref{le:obv1}.
\end{proof}

 The proof of  Lemma~\ref{le:barlet} below was  suggested to us by Daniel Barlet. Note that this lemma can be compared to~\cite{BM11} which treats complex neighborhoods of real manifolds.

\begin{lemma}\label{le:barlet}
Let $X$ be a closed smooth Stein submanifold of $Y=\C^N$ and let $U\subset X$ be an subanalytic relatively compact Stein open subset of $X$. Then for each open neighborhood $W$ of $X$ in $Y$, there exists a subanalytic relatively compact  Stein  open subset $V$ of $W$ such that $V\cap X=U$.
\end{lemma}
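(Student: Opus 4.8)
\textbf{Proof proposal for Lemma~\ref{le:barlet}.}

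The plan is to build $V$ as a subanalytic modification of a tubular neighborhood of $U$ in $Y$. First I would fix, using that $X$ is closed and smooth in $Y=\C^N$, a subanalytic tubular neighborhood: since $X$ is a closed complex submanifold of $\C^N$, there is a subanalytic open neighborhood $T$ of $X$ in $Y$ together with a (real analytic, subanalytic) retraction $\pi\cl T\to X$. Shrinking, we may assume $T\subset W$. The naive candidate $\opb{\pi}(U)$ has the right trace on $X$ and is subanalytic, but it is neither relatively compact nor Stein, so two corrections are needed.

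For relative compactness, since $U$ is relatively compact in $X$ and $X$ is closed in $\C^N$, the closure $\ol U$ is compact in $\C^N$; I would intersect with a large open ball $B$ of $\C^N$ containing $\ol U$, and also use a subanalytic function controlling the distance to $X\setminus U$ inside $T$ to cut off the part of $\opb\pi(U)$ that escapes to infinity along $X$. Concretely, one can take $V_0=\{y\in T\cap B : \pi(y)\in U,\ \vvert y-\pi(y)\vvert < \eps\cdot\delta(\pi(y))\}$ where $\delta(x)=\dist(x,X\setminus U)$ and $\eps>0$ is small; this is subanalytic (distance functions to subanalytic sets are subanalytic), relatively compact in $W$, open, and satisfies $V_0\cap X=U$ since on $X$ one has $y=\pi(y)$ and $\pi(y)\in U$.

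The Stein property is the main obstacle, and this is exactly where Barlet's reference to~\cite{GR65}*{Ch.~8, Sect.~C, Th.~8} enters. The idea is that $U$, being Stein and relatively compact, carries a strictly plurisubharmonic exhaustion-type function; more precisely, one should produce a smooth strictly plurisubharmonic function $\varphi$ on a neighborhood of $\ol U$ in $X$ whose sublevel sets exhaust $U$ (possible because $U$ is a relatively compact Stein open subset — here I would invoke the cited Grauert--Remmert result to get such a $\varphi$ with good, in particular subanalytically controlled, behaviour near $\partial U$). Pulling $\varphi$ back by $\pi$ gives a function on $T$ that is strictly plurisubharmonic in the ``horizontal'' directions; adding a large multiple of $\vvert y-\pi(y)\vvert^2$ (strictly plurisubharmonic in the normal directions, since $X$ is a complex submanifold so the normal slices are honestly complex) makes $\Phi(y)=\pi^*\varphi(y)+C\vvert y-\pi(y)\vvert^2$ strictly plurisubharmonic on a neighborhood of $X$ in $Y$ for $C\gg 0$. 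Then $V=\{y : \Phi(y)<c\}$ for an appropriate level $c$ is a pseudoconvex (hence Stein) open subset of $Y$, is relatively compact, lies in $W$ after shrinking, and its trace on $X$ is $\{x\in X:\varphi(x)<c\}=U$ (choosing $\varphi$ so that $U$ is itself a sublevel set, or by exhausting and taking a union over an increasing sequence of such $V$'s, which remains subanalytic only if done with care — so I would prefer the single-sublevel-set description, guaranteed by the cited theorem). Subanalyticity of $V$ follows because $\Phi$ can be arranged to be subanalytic: $\vvert y-\pi(y)\vvert^2$ is subanalytic, $\pi$ is subanalytic, and the cited result should be used to choose $\varphi$ subanalytic (or at least with subanalytic sublevel sets near $\partial U$).

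I expect the genuinely delicate point to be reconciling three demands on the single function $\varphi$ on $X$ near $\ol U$: strict plurisubharmonicity, the requirement that a sublevel set equal $U$ exactly (not just exhaust it), and subanalyticity — all three simultaneously. This is precisely the content that Barlet extracts from~\cite{GR65}*{Ch.~8, Sect.~C, Th.~8}, and in the write-up I would state as a sub-lemma: \emph{a relatively compact subanalytic Stein open $U$ in a Stein manifold $X$ admits a subanalytic strictly plurisubharmonic $\varphi$ defined near $\ol U$ with $U=\{\varphi<0\}$.} Granting that, the tubular-neighborhood construction above goes through routinely, and one verifies $V\cap X=U$, $V$ subanalytic, $V$ relatively compact, $V\subset W$, and $V$ Stein in the final bookkeeping step.
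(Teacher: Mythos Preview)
Your approach has a genuine gap. The sub-lemma you isolate --- that a relatively compact subanalytic Stein open $U\subset X$ can be written as $U=\{\varphi<0\}$ for some \emph{strictly} plurisubharmonic subanalytic $\varphi$ defined on a neighborhood of $\ol U$ --- is false in general. Take $X=\C^2$ and $U=D\times D$ the unit bidisc: it is subanalytic, relatively compact and Stein, yet its boundary contains the analytic discs $\{z_0\}\times D$ with $|z_0|=1$, along which any such $\varphi$ would have to be identically $0$; a strictly plurisubharmonic function cannot be constant on a positive-dimensional analytic set. Hence no such $\varphi$ exists, your function $\Phi$ cannot have a single sublevel set with trace exactly $U$, and the ``exhaust and take a union'' alternative you mention destroys subanalyticity. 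A secondary issue: with a merely real-analytic retraction $\pi$, the pullback $\pi^*\varphi$ has no reason to be plurisubharmonic; you would at least need the holomorphic retraction of Docquier--Grauert here.

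You have also misread the reference. \cite{GR65}*{Ch.~8, Sect.~C, Th.~8} is not a statement about plurisubharmonic defining functions; it is the holomorphic tubular neighborhood theorem, giving a biholomorphism between a neighborhood of $X$ in $Y$ and a neighborhood of the zero section in the normal bundle $T_XY$. The paper's proof uses precisely this, and then bypasses the obstruction above entirely: once transported to $T_XY$, the full fiber set $U\times_XT_XY$ is Stein because it is a holomorphic vector bundle over the Stein base $U$ (loc.\ cit.\ Th.~9), and one obtains $V$ by intersecting it with a rescaling $c\cdot W$ of a relatively compact subanalytic neighborhood $W\supset\ol U$ in $T_XY$, using the $\R^+$-action on the fibers to force $V$ into the tubular neighborhood for $c$ small. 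No defining function for $U$ is ever needed; the Stein property of $V$ comes from an intersection of Stein opens, not from a sublevel-set description.
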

\begin{proof}
Denote by $T_XY$ the normal bundle to $X$ in $Y$ and identify $X$ with the zero-section of $T_XY$. 
By~\cite{GR65}*{Ch.~8, Sect.~C,Th.~8}, there exist an open neighborhood $\Omega$ of $X$ in $Y$, an open neighborhood $\tw \Omega$ of $X$ in $T_XY$ and a holomorphic isomorphism $\Omega\isoto \tw \Omega$. It is thus enough to construct $V$ in $\tw \Omega\subset T_XY$. 
Since $U\times_XT_XY$ is a vector bundle over a Stein manifold, it is Stein by loc.\ cit.\ Th.~9. 
Moreover, since $T_XY$ is Stein, there exists an open relatively compact subanalytic subset $W$ of $T_XY$ which contains $\ol U$. 
Recalling that $\R^+$ acts on the vector bundle $T_XY$ we get that 
the open set $V=(c\cdot W)\cap(U\times_XT_XY)$ is Stein, subanalytic and contained in $\tw \Omega$ for $c>0$ small enough. 
\end{proof}

Let $K$ be a Stein subanalytic compact subset of a complex manifold $X$ and consider the ring $A\eqdot\sho_X(K)$. 
Let  $\mdc[\sho_X\vert_K]$ denote the category of coherent $\sho_X$-modules  defined in a neighborhood of $K$ and let 
 $\mdf[A]$ denote the category of finitely generated $A$-modules.  It is well-known after the work of~Frisch and~Siu
(see~\cite{Fr67, Si69} and~\cite{Ta02}*{Th.~11.9.2}) that  $A$ is Noetherian and that the functor $\sect(K;\scbul)$ induces 
 an equivalence of categories
\eq\label{eq:eqv}
&&\mdc[\sho_X\vert_K]\isoto \mdf[A]. 
\eneq

\begin{lemma}\label{le:obv4}
Let $K$ be a Stein subanalytic subset of the complex manifold $X$. 
The category  $\mdc[\sho_X\vert_K]$ has finite homological dimension. Moreover, any object of this category admits a finite  resolution by projective objects and projective objects are direct factors of finite free $\sho_X$-modules.
\end{lemma}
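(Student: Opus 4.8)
The plan is to transfer the statement to the algebraic side via the equivalence~\eqref{eq:eqv} with $A = \sho_X(K)$, which is Noetherian, and then invoke classical commutative-algebra facts about Stein algebras. First I would recall that by~\eqref{eq:eqv} the category $\mdc[\sho_X\vert_K]$ is equivalent to $\mdf[A]$, so it suffices to prove the analogous assertions for finitely generated modules over the Noetherian ring $A$: namely that $A$ has finite global dimension, that every finitely generated $A$-module admits a finite resolution by finitely generated projective $A$-modules, and that finitely generated projective $A$-modules are direct summands of finite free modules (the last being automatic for any ring). The existence of a finite projective resolution is then equivalent to finite global dimension together with Noetherianity: a finitely generated module $M$ has a resolution $\cdots \to P_1 \to P_0 \to M \to 0$ by finite free modules (Noetherian), and if $\mathrm{gl.dim}(A) = d < \infty$ one truncates at step $d$, the $d$-th syzygy being projective (and finitely generated, hence a direct factor of a finite free module).

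Thus the crux is finite global dimension of $A = \sho_X(K)$. The key point is that $A$ is a regular ring in the sense that all its localizations at maximal (equivalently prime) ideals are regular local rings: for a Stein compact $K$, the maximal ideals of $A$ are in bijection with the points $x \in K$, and the localization $A_{\mathfrak{m}_x}$ is (faithfully flat over, and in fact isomorphic to) the local ring $\sho_{X,x}$ of germs of holomorphic functions, which is a regular local ring of dimension $n = \dim_{\C} X$. One has to be a little careful since $K$ need not be connected and $X$ might have components of different dimension, but in all cases $\dim \sho_{X,x} \le n$ is uniformly bounded. For a Noetherian ring all of whose localizations at maximal ideals are regular local of dimension $\le n$, the global dimension is $\le n$ (this is the standard characterization of regular Noetherian rings of bounded dimension, via Serre's theorem that $\mathrm{gl.dim}(A_{\mathfrak{m}}) = $ the minimal length of a free resolution can be checked locally, i.e. $\mathrm{pd}_A M = \sup_{\mathfrak{m}} \mathrm{pd}_{A_{\mathfrak{m}}} M_{\mathfrak{m}}$ for finitely generated $M$ over Noetherian $A$).

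In carrying this out I would: (1) cite the Frisch--Siu result already recalled in the excerpt for Noetherianity of $A$ and the equivalence~\eqref{eq:eqv}; (2) identify $\mathrm{Spec}\,A$ near its closed points, showing $A_{\mathfrak{m}_x} \cong \sho_{X,x}$, hence regular of dimension $\le n$ — this uses that holomorphic germ rings are regular local (e.g. $\sho_{X,x} \cong \C\{z_1,\dots,z_n\}$ in local coordinates); (3) conclude $\mathrm{gl.dim}(A) \le n$ by the local-global principle for projective dimension over a Noetherian ring; (4) deduce the three assertions of the lemma as above.

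The main obstacle is step~(2): precisely pinning down the relation between the localizations of the global Stein algebra $A = \sho_X(K)$ at its maximal ideals and the analytic local rings $\sho_{X,x}$. The subtlety is that a maximal ideal of $A$ need not obviously be of the form $\mathfrak{m}_x = \{f \in A : f(x) = 0\}$ — one needs that $K$ is holomorphically convex / that $A$ separates points and realizes all maximal ideals as point evaluations, which is part of the Frisch--Siu / Cartan package for Stein compacts (and here we also use that $K$ is subanalytic only insofar as Lemma~\ref{le:obv2} guarantees $K$ really is Stein in the required sense). Once the spectrum is understood, the flatness $\sho_{X,x} \to A_{\mathfrak{m}_x}$ being an isomorphism, hence preservation of regularity and dimension, is standard; and the global-dimension computation is then purely formal.
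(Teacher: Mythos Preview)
Your overall strategy---reduce via~\eqref{eq:eqv} to $\mdf[A]$ and show that $A$ is a regular Noetherian ring of finite global dimension---is sound and can be made to work, but step~(2) contains a genuine error: the claimed isomorphism $A_{\mathfrak{m}_x} \simeq \sho_{X,x}$ is false. Take $X = \C$, $K$ the closed unit disc, $x = 0$. Any element of $A_{\mathfrak{m}_0}$ is a quotient $f/h$ with $f,h$ holomorphic in a neighborhood of $K$ and $h(0) \neq 0$; such a quotient extends meromorphically across $K$ with at worst poles at the isolated zeros of $h$. But a germ in $\sho_{\C,0}$ represented by a power series of radius of convergence strictly less than $1$ having a natural boundary on its circle of convergence cannot arise this way. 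Thus the natural map $A_{\mathfrak{m}_x} \to \sho_{X,x}$ (this is the direction that exists; there is no map $\sho_{X,x} \to A_{\mathfrak{m}_x}$ as you later write) is a strict injection. The repair is to pass to completions: via~\eqref{eq:eqv} one has $A/\mathfrak{m}_x^{\,k} \simeq \sho_{X,x}/\mathfrak{m}_{X,x}^{\,k}$ for all $k$, so $\widehat{A_{\mathfrak{m}_x}} \simeq \C[[z_1,\dots,z_n]]$ is regular, whence $A_{\mathfrak{m}_x}$ is regular local of dimension at most $n$ and your local--global argument for $\mathrm{gl.dim}(A)\leq n$ goes through.

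The paper sidesteps all of this. Its proof of finite homological dimension is a one-liner that never looks at $\mathrm{Spec}\,A$: the functor $\sect(K;\,\cdot\,)$ is exact on coherent sheaves (Cartan~B for the Stein compact $K$), and the sheaf-Hom functor $\hom[\sho_X]$ already has finite homological dimension (each stalk $\sho_{X,x}$ being regular local of dimension at most $n$); hence so does their composite $\Hom[\sho_X]$. The remaining assertions are then drawn from~\eqref{eq:eqv} exactly as you do. The moral difference is that the paper uses the regularity of $\sho_{X,x}$ \emph{locally on $X$}, via the sheaf Ext, rather than transporting it through the global algebra $A$; this is precisely what makes your step~(2) delicate and what the paper's argument avoids.
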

\begin{proof}
The functor $\hom[\sho_X]$ has finite homological dimension and the functor $\sect(K;\scbul)$ is exact. Therefore, the functor 
$\Hom[\sho_X]$ has finite homological dimension.
This proves the first assertion. The second one follows from the equivalence~\eqref{eq:eqv}.
\end{proof}

\subsection{A vanishing theorem on the affine space}
\subsubsection*{The sheaf $\OO_\Xsa^{2,\tp}$}

Recall the spaces $L^{2,s}(U)$ and $L^{2,\tp}(U)$  of~\eqref{eq:L2sU} and~\eqref{eq:L2tpU}. On a complex manifold we denote by  $L^{2,s,(p,q)}(U)$ and $L^{2,\tp,(p,q)}(U)$   the spaces of differential  forms with coefficients in these spaces. We set
\eq\label{eq:L20s}
&&\ba{l}
L^{2,s,(p,q)}_0(U)\eqdot\{f\in L^{2,s,(p,q)}(U);\ol\partial f\in L^{2,s,(p,q+1)}(U)\},\label{eq:L2s0}\\
L_{0}^{2,\tp,(p,q)}(U)\eqdot\{f\in L^{2,\tp,(p,q)}(U);\ol\partial f\in L^{2,\tp,(p,q+1)}(U)\},\\
L_{0}^{2,\tpst,(p,q)}(U)\eqdot\indlim[s] L^{2,s,(p,q)}_0(U).
\ea\eneq

\begin{lemma}\label{le:dolbOt2tpst}
The natural morphism $L_0^{2,\tpst,(p,q)}(U) \to L_0^{2,\tp,(p,q)}(U) $ is an isomorphism. 
\end{lemma}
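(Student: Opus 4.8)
The statement claims that the presheaf-level inductive limit $L_0^{2,\tpst,(p,q)}$ (limit over $s$ of spaces $L^{2,s,(p,q)}_0(U)$ computed open-by-open) agrees with $L_0^{2,\tp,(p,q)}$, where the latter first takes the inductive limit in the $L^2$-growth condition and then imposes that $\ol\partial f$ also lies in the limit space. The content is that on a fixed relatively compact subanalytic open $U$, if $f\in L^{2,s,(p,q)}(U)$ and $\ol\partial f\in L^{2,t,(p,q+1)}(U)$ for some (possibly different) $t$, then in fact one may take a single weight: there is some $s'\geq\max(s,t)$ with both $f\in L^{2,s',(p,q)}(U)$ and $\ol\partial f\in L^{2,s',(p,q+1)}(U)$. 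First I would note that the natural morphism is injective, so only surjectivity needs proof.

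**Key observation.** The function $\dist_U(x)=\dist(x,X\setminus U)$ is bounded above on the relatively compact set $U$, say by a constant $R$. Hence for $0\leq a\leq b$ one has $\dist_U(x)^b\leq R^{b-a}\dist_U(x)^a$, and therefore $L^{2,a,(p,q)}(U)\subset L^{2,b,(p,q)}(U)$: increasing the weight $s$ only enlarges the space $L^{2,s}$. So given $f$ with $f\in L^{2,s,(p,q)}(U)$ and $\ol\partial f\in L^{2,t,(p,q+1)}(U)$, set $s'=\max(s,t)$; then $f\in L^{2,s,(p,q)}(U)\subset L^{2,s',(p,q)}(U)$ and $\ol\partial f\in L^{2,t,(p,q+1)}(U)\subset L^{2,s',(p,q+1)}(U)$, so $f\in L^{2,s',(p,q)}_0(U)$, which maps to the given element of $L_0^{2,\tp,(p,q)}(U)$. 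This proves surjectivity, and the injectivity is immediate from the fact that all transition maps and all the maps into $L^{2,\tp}$ are inclusions of function spaces. The two inductive systems are cofinal in one another once one observes that the ``mixed-weight'' condition defining $L_0^{2,\tp}$ is subsumed by a single large weight.

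**Potential subtlety.** The only point requiring a moment's care is the monotonicity $L^{2,a}(U)\subseteq L^{2,b}(U)$ for $a\leq b$: it rests precisely on $\dist_U$ being bounded on $U$, which holds because $U$ is relatively compact (the diameter convention $\dist(x,\varnothing)=D_M+1$ is irrelevant here since $X\setminus U\neq\varnothing$ unless $U=X$, and if $U$ were all of $X$ compactness would again bound $\dist_U\equiv D_X+1$). Once this is in hand the argument is a purely formal manipulation of filtered colimits and there is no analytic obstacle — in particular, no use of Hörmander estimates or of Lojasiewicz inequalities is needed, only the boundedness of $\dist_U$. I would write the proof in three lines: monotonicity of the $L^{2,s}$ spaces, then the $s'=\max(s,t)$ trick for surjectivity, then injectivity by inspection.

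\begin{proof}
The natural morphism is a morphism of inductive limits induced by inclusions of subspaces of $L^{0,(p,q)}_M(U)$, hence is a monomorphism. Let us prove it is an epimorphism. Since $U$ is relatively compact, the function $\dist_U$ is bounded on $U$, say $\dist_U\leq R$ for some $R>0$. Consequently, for $0\leq a\leq b$ we have $\dist_U(x)^b\leq R^{b-a}\dist_U(x)^a$ pointwise on $U$, and therefore
\eqn
&&L^{2,a,(p,q)}(U)\subseteq L^{2,b,(p,q)}(U).
\eneqn
Now let $f\in L_0^{2,\tp,(p,q)}(U)$. By definition there exist $s,t\geq0$ with $f\in L^{2,s,(p,q)}(U)$ and $\ol\partial f\in L^{2,t,(p,q+1)}(U)$. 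Setting $s'\eqdot\max(s,t)$, the inclusions above give $f\in L^{2,s',(p,q)}(U)$ and $\ol\partial f\in L^{2,s',(p,q+1)}(U)$, that is, $f\in L^{2,s',(p,q)}_0(U)$. Its image in $L_0^{2,\tpst,(p,q)}(U)$ maps to $f$, proving surjectivity.
\end{proof}
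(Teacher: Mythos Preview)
Your proof is correct and follows essentially the same route as the paper's: both observe injectivity because everything sits inside $L^{2,\tp,(p,q)}(U)$, and for surjectivity both pick a common weight $s'\geq\max(s,t)$ using the monotonicity $L^{2,a}(U)\subseteq L^{2,b}(U)$ for $a\leq b$. The only difference is that you spell out the reason for this monotonicity (boundedness of $\dist_U$ on the relatively compact $U$), whereas the paper simply asserts the inclusions $E^s\hookrightarrow E^t$, $F^s\hookrightarrow F^t$.
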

The proof is obvious but for the reader's convenience, we develop it. 
\begin{proof}
Set for short 
\eqn
&&E^s=L^{2,s,(p,q)}(U),\quad E^\tp=\sindlim E^s,\quad
F^s=L^{2,s,(p,q+1)}(U),\quad F^\tp=\sindlim F^s,\\   
&&G=\Db^{\tp,(p,q+1)}(U),\quad u=\ol\partial\cl E^\tp\to G, \\
&&E_0^s=\{x\in E^s;u(x)\in F^s\},\quad E_0^\tpst=\sindlim E_0^s,\quad E^\tp_0=\{x\in E^\tp;u(x)\in F^\tp\}.
\eneqn
Notice that we have monomorphisms $E^s\into E^t$ and $F^s\into F^t$ for $s\leq t$.
The morphism $E_0^\tpst\to E_0^\tp$ is a monomorphism since both spaces are contained in $E^\tp$. Let us show that it is an epimorphism.
Let $x\in E_0^\tp$. There exists $s$ and $t\geq s$  such that $x\in E^s$ and $u(x)\in F^t$. Therefore, $x\in E_0^t$. 
\end{proof}

We consider the complexes
\eq
%&&\ba{l}
\OO^{2,s}(U)&\eqdot&0\to L^{2,s,(0,0)}_0(U) \to[\ol\partial]\cdots\to[\ol\partial]L^{2,s,(0,n)}_0(U)\to0,\label{eq:dolbOt2s}\\
\OO^{2,\tp}(U)&\eqdot&0\to L_{0}^{2,\tp,(0,0)}(U) \to[\ol\partial]\cdots\to[\ol\partial]L_{0}^{2,\tp,(0,n)}(U)\to0.\label{eq:dolbOt2tp}
%\ea
\eneq

We shall first recall a fundamental result due to H\"ormander. 

\begin{theorem}[{see~\cite{Ho65}*{Th.~2.2.1'}}]\label{th:horm}
Assume that $X=\C^n$ and $U\subset X$ is a relatively compact open subset and is Stein. Then the complex~\eqref{eq:dolbOt2s} is concentrated in degree $0$.
\end{theorem}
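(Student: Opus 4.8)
The plan is to reduce the statement to the classical $L^2$ existence theorem of H\"ormander for the $\overline\partial$-operator on pseudoconvex domains. Recall that Theorem~\ref{th:horm} asserts that for $U\subset\C^n$ relatively compact, open and Stein (hence pseudoconvex), the complex~\eqref{eq:dolbOt2s} is exact in positive degrees, i.e.\ that every $\overline\partial$-closed form $f\in L^{2,s,(0,q)}_0(U)$ with $q\geq 1$ is of the form $f=\overline\partial g$ for some $g\in L^{2,s,(0,q-1)}_0(U)$. The point is precisely that one can find a \emph{solution with the same weighted $L^2$ bound}, controlled by $\dist_U^{s}$; this is where H\"ormander's estimate with a plurisubharmonic weight enters.

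First I would fix $s\geq 0$ and a $\overline\partial$-closed form $f$ lying in $L^{2,s,(0,q)}_0(U)$ with $q\geq 1$; since $U$ is relatively compact, after covering $\overline U$ by finitely many balls it suffices to work on a bounded pseudoconvex $U$. The function $\varphi(z)=-2s\log\dist_U(z)+|z|^2$ is plurisubharmonic on $U$ (as $-\log\dist(\cdot,\complement U)$ is plurisubharmonic on any pseudoconvex open set, a classical fact), and $e^{-\varphi}$ is, up to the harmless bounded factor $e^{-|z|^2}$, comparable to $\dist_U^{2s}$. Thus the hypothesis $\|\dist_U^{s}f\|_{L^2}<\infty$ says exactly that $f\in L^2_{(0,q)}(U,\varphi)$. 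H\"ormander's theorem (\cite{Ho65}*{Th.~2.2.1'}) then produces $g\in L^2_{(0,q-1)}(U,\varphi)$ with $\overline\partial g=f$ and $\int_U|g|^2e^{-\varphi}\,d\lambda\leq C\int_U|f|^2e^{-\varphi}\,d\lambda$, the constant $C$ depending only on the diameter of $U$. Translating back, $g\in L^{2,s,(0,q-1)}(U)$, and since $\overline\partial g=f\in L^{2,s,(0,q)}(U)$ we get $g\in L^{2,s,(0,q-1)}_0(U)$; in degree $0$ there is nothing to prove since a $\overline\partial$-closed element of $L^{2,s,(0,0)}_0(U)$ is just a weighted-$L^2$ holomorphic function, and the complex there computes $H^0$. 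This establishes exactness in positive degrees, which is the assertion.

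The only delicacy is the passage from $\varphi$-weighted estimates to $\dist_U^{s}$-weighted estimates: one must make sure that the plurisubharmonic weight $\varphi$ is everywhere finite on $U$ (it is, since $\dist_U>0$ on $U$) and that H\"ormander's hypotheses --- $U$ pseudoconvex, $\varphi$ plurisubharmonic, the complex Laplacian on $\varphi$ bounded below by a positive continuous function --- are met; the term $|z|^2$ in $\varphi$ guarantees the last point, contributing an identity term to the complex Hessian. A subtlety worth a line is that Theorem~\ref{th:horm} is stated for the \emph{exact} weight exponent $s$ on \emph{both} sides, so no loss of growth order is incurred; this is precisely what the weighted estimate, as opposed to a soft argument, buys us, and it is what makes the theorem usable in the proof of the main vanishing result (where one passes to the inductive limit over $s$ via Lemma~\ref{le:dolbOt2tpst}).

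The main obstacle, to the extent there is one, is bookkeeping rather than mathematics: verifying that $-\log\dist_U$ is plurisubharmonic and that the various norms are comparable up to constants independent of $f$, and being careful that the solution $g$ inherits membership in $L^{2,s,(0,q-1)}_0(U)$ and not merely in the larger space without the $\overline\partial$-control. Since $\overline\partial g = f$ by construction and $f$ already lies in $L^{2,s,(0,q)}(U)$, this last point is automatic, so in fact the argument is a direct citation of H\"ormander once the weight is correctly identified.
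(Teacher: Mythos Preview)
Your argument is correct and is exactly the approach the paper indicates: the paper does not give a proof but simply cites H\"ormander and remarks that his theorem applies because $\phi\coloneqq-\ln(\dist(x,X\setminus U))$ is plurisubharmonic on the Stein open set $U$, which is precisely the weight you identify. One small remark: the sentence about ``covering $\overline U$ by finitely many balls'' is unnecessary, since $U$ relatively compact in $\C^n$ is already bounded and pseudoconvex, so you can delete it.
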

Note that H{\"o}rmander's theorem applies since the function  $\phi\eqdot-\ln(\dist(x,X\setminus U))$ is plurisubharmonic on the Stein open subset $U$.

\begin{example}
Let $X=A^1(\C)$, the complex line with coordinate $z$, and let $U=D^\times$ the disc minus $\{0\}$. 
The map $\ol\partial\cl L_0^{2,1}(U)\to L^{2,1}(U)$ is  surjective (note that  $L^{2,1}(U)\simeq L_0^{2,1,(0,1)}(U)$). For example, $1/\ol z\in  L^{2,1}(U)$ and 
$\ln\vert z\vert\in  L^{2,1}(U)$ is  a solution of the equation $ \ol\partial u=1/\ol z$.  
\end{example}

\begin{corollary}\label{cor:horm}
Assume that $X=\C^n$ and $U\in\Op_\Xsa$  is Stein. Then the complex~\eqref{eq:dolbOt2tp}
is concentrated in degree $0$.
\end{corollary}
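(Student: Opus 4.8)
The plan is to deduce the corollary from Hörmander's theorem (Theorem~\ref{th:horm}) by passing to the inductive limit over $s\geq0$, using Lemma~\ref{le:dolbOt2tpst} to identify the limit complex with~\eqref{eq:dolbOt2tp}. First I would observe that for fixed $s$, the complex~\eqref{eq:dolbOt2s} is exact in positive degrees by Theorem~\ref{th:horm}. Since filtered inductive limits are exact in the category of vector spaces, the complex $\indlim[s\geq0]\OO^{2,s}(U)$ is also exact in positive degrees. The term of degree $q$ of this limit complex is $\indlim[s\geq0]L^{2,s,(0,q)}_0(U)=L_0^{2,\tpst,(0,q)}(U)$, which by Lemma~\ref{le:dolbOt2tpst} is canonically isomorphic to $L_0^{2,\tp,(0,q)}(U)$, the term of degree $q$ of~\eqref{eq:dolbOt2tp}.

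The one point requiring a little care is that forming $\OO^{2,\tp}(U)$ involves the subspaces $L^{2,\tp,(0,q)}_0$ carved out by the condition $\ol\partial f\in L^{2,\tp,(0,q+1)}$, and a priori the inductive limit of the complexes $\OO^{2,s}(U)$ might not obviously agree termwise with $\OO^{2,\tp}(U)$ — it agrees with the complex built from $L_0^{2,\tpst,(0,q)}$. This is exactly the gap closed by Lemma~\ref{le:dolbOt2tpst}: the natural map $L_0^{2,\tpst,(0,q)}(U)\to L_0^{2,\tp,(0,q)}(U)$ is an isomorphism, and these isomorphisms are compatible with $\ol\partial$, hence give an isomorphism of complexes $\indlim[s]\OO^{2,s}(U)\isoto\OO^{2,\tp}(U)$. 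So I would phrase the argument as: (1) exactness of $\OO^{2,s}(U)$ in degrees $>0$ from Theorem~\ref{th:horm}; (2) exactness of filtered colimits; (3) the identification of $\indlim[s]\OO^{2,s}(U)$ with $\OO^{2,\tp}(U)$ via Lemma~\ref{le:dolbOt2tpst}.

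There is no real obstacle here — the corollary is essentially a formal consequence of Hörmander's estimate. The only thing to double-check is that the colimit is genuinely filtered (it is, indexed by $\R_{\geq0}$) and that the transition maps in $\{\OO^{2,s}(U)\}_s$ are the obvious inclusions, which is clear from the definition~\eqref{eq:L2s0} since $L^{2,s,(p,q)}(U)\subset L^{2,t,(p,q)}(U)$ for $s\leq t$ on a relatively compact $U$ (the weight $\dist_U^s$ dominates $\dist_U^t$ up to the diameter). Once that is noted, the proof is two lines: apply $\indlim[s]$ to the exact sequences of Theorem~\ref{th:horm} and invoke Lemma~\ref{le:dolbOt2tpst}.
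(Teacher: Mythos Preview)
Your proposal is correct and follows exactly the paper's approach: invoke Lemma~\ref{le:dolbOt2tpst} to identify $\indlim[s]\OO^{2,s}(U)$ with $\OO^{2,\tp}(U)$, then use that filtrant inductive limits commute with cohomology together with Theorem~\ref{th:horm}. The paper's proof is the two-line version of what you wrote; your additional checks (filteredness of the index set, the transition maps being inclusions, compatibility with $\ol\partial$) are all routine and correct.
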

\begin{proof}
By Lemma~\ref{le:dolbOt2tpst}, one has $\sindlim L_{0}^{2,s,(p,q)}(U)\isoto L_{0}^{2,\tp,(p,q)}(U)$. Since the inductive limit is filtrant, it commutes with the functor of cohomology and the result follows from~Theorem~\ref{th:horm}. 
\end{proof}
Denote by  $\shl_{\Xsa}^{2,\tp,(p,q)}$ and $\shl_{0,\Xsa}^{2,\tp,(p,q)}$ the presheaves $U\mapsto L^{2,\tp,(p,q)}(U)$ and $U\mapsto L_{0}^{2,\tp,(p,q)}(U)$ on $\Xsa$, respectively.

It follows from Lemma~\ref{le:L2tpshv} that the presheaves $\shl_{\Xsa}^{2,\tp,(p,q)}$ are sheaves of  $\Cinftp_\Xsa$-modules.

\begin{lemma}\label{le:L2tpacyc}
The presheaves  $\shl_{0,\Xsa}^{2,\tp,(p,q)}$  are sheaves and are $\Cinftp_\Xsa$-modules.  In particular, these sheaves are $\sect$-acyclic.
\end{lemma}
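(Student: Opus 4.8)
The plan is to reduce the statement for $\shl_{0,\Xsa}^{2,\tp,(p,q)}$ to the two standard facts already established in the excerpt: that being a $\Cinftp_\Xsa$-module forces $\sect$-acyclicity (Lemma on $\Cinftp_\Msa$-modules, applied via Lemma~\ref{le:hopartition}), and that Lojasiewicz's inequalities together with finiteness of coverings force the presheaf to be a sheaf (as in the proof of Lemma~\ref{le:L2tpshv}~(i)). So the real content is (a) the sheaf axiom for $\shl_{0,\Xsa}^{2,\tp,(p,q)}$, and (b) stability of the subspace $L_0^{2,\tp,(p,q)}(U)$ under multiplication by temperate $C^\infty$-functions. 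Once these two are in hand, $\sect$-acyclicity is immediate by citing the earlier lemma.

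\textbf{Sheaf axiom.} First I would recall that $\shl_{\Xsa}^{2,\tp,(p,q)}$ is already known to be a sheaf (it follows from Lemma~\ref{le:L2tpshv} applied componentwise to forms), and similarly $\shl_{\Xsa}^{2,\tp,(p,q+1)}$. Now $\shl_{0,\Xsa}^{2,\tp,(p,q)}$ is, section by section, the subpresheaf of $\shl_{\Xsa}^{2,\tp,(p,q)}$ cut out by the condition $\ol\partial f\in L^{2,\tp,(p,q+1)}$. Since $\ol\partial$ commutes with restriction and the target presheaf is separated, the kernel-type condition ``$\ol\partial f$ lies in the subsheaf $\shl_{\Xsa}^{2,\tp,(p,q+1)}$ of $\Db_\Xsa^{\tp,(p,q+1)}$'' is local, hence defines a subsheaf. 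Concretely, to verify Proposition~\ref{pro:cnssheaf}~(i): $F(\varnothing)=0$ is clear; for $U=U_1\cup U_2$, given $f_i\in L_0^{2,\tp,(p,q)}(U_i)$ agreeing on $U_1\cap U_2$, the sheaf property of $\shl_{\Xsa}^{2,\tp,(p,q)}$ produces $f\in L^{2,\tp,(p,q)}(U)$ restricting to $f_i$, and $\ol\partial f$ restricts to $\ol\partial f_i\in L^{2,\tp,(p,q+1)}(U_i)$, so $\ol\partial f\in L^{2,\tp,(p,q+1)}(U)$ again by the sheaf property of $\shl_{\Xsa}^{2,\tp,(p,q+1)}$; thus $f\in L_0^{2,\tp,(p,q)}(U)$. (Here one uses crucially that coverings in $\Xsa$ are finite and that $\ol\partial$ applied distributionally patches, so no growth constant degrades uncontrollably — the Lojasiewicz inequality of Lemma~\ref{le:lojaleq} is what makes the temperate $L^2$ estimate on the union follow from the ones on the pieces.)

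\textbf{$\Cinftp_\Xsa$-module structure.} I would check that if $f\in L_0^{2,\tp,(p,q)}(U)$ and $\theta\in\Cinftp_\Xsa(U)$, then $\theta f\in L_0^{2,\tp,(p,q)}(U)$. That $\theta f\in L^{2,\tp,(p,q)}(U)$ is exactly Lemma~\ref{le:L2tpshv}~(ii). For the $\ol\partial$-condition, Leibniz gives $\ol\partial(\theta f)=\ol\partial\theta\wedge f+\theta\,\ol\partial f$. Both $\theta$ and its derivative $\ol\partial\theta$ have polynomial growth (as $\theta$ is temperate, all derivatives in local charts have polynomial growth), so $\ol\partial\theta$ is a coefficient of a form in $\Cinftp_\Xsa(U)$, hence $\ol\partial\theta\wedge f\in L^{2,\tp,(p,q+1)}(U)$ by Lemma~\ref{le:L2tpshv}~(ii) again; and $\theta\,\ol\partial f\in L^{2,\tp,(p,q+1)}(U)$ since $\ol\partial f\in L^{2,\tp,(p,q+1)}(U)$ by hypothesis and $\theta$ is a temperate multiplier. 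Adding, $\ol\partial(\theta f)\in L^{2,\tp,(p,q+1)}(U)$, so $\theta f\in L_0^{2,\tp,(p,q)}(U)$. Therefore $\shl_{0,\Xsa}^{2,\tp,(p,q)}$ is a sheaf of $\Cinftp_\Xsa$-modules, and by the earlier lemma (any sheaf of $\Cinftp_\Msa$-modules is $\Gamma$-acyclic) it is $\sect$-acyclic.

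The only mildly delicate point — the ``main obstacle'', though it is really routine — is making sure the gluing in the sheaf axiom does not secretly require uniform control of the growth exponent $s$: one must use that $L^{2,\tp}=\indlim_s L^{2,s}$ and that the covering is \emph{finite}, so a finite $\max$ of exponents suffices, together with Lemma~\ref{le:lojaleq} to compare $\dist_U$ with $\max_j\dist_{U_j}$. This is precisely the argument already invoked for Lemma~\ref{le:L2tpshv}~(i), so I would simply say that part (i)'s proof applies verbatim with the extra $\ol\partial$-condition carried along, and then conclude with the module argument above.
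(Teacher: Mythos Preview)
Your proof is correct and follows essentially the same approach as the paper: the sheaf property of $\shl_{0,\Xsa}^{2,\tp,(p,q)}$ is deduced from that of $\shl_{\Xsa}^{2,\tp,(p,q)}$ and $\shl_{\Xsa}^{2,\tp,(p,q+1)}$, and the $\Cinftp_\Xsa$-module structure from the Leibniz rule $\ol\partial(\theta f)=(\ol\partial\theta)f+\theta(\ol\partial f)$. Your write-up simply spells out in more detail what the paper compresses into two sentences.
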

\begin{proof}
 The fact that  $\shl_{0,\Xsa}^{2,\tp,(p,q)}$ is a sheaf follows from the fact that  $\shl_{\Xsa}^{2,\tp,(p,q+1)}$ is a sheaf.
It is a $\Cinftp_\Xsa$-module since $L^{2,\tp,(p,q)}(U)$ is a $ \Cinftp_\Xsa(U)$-module and for 
$\theta\in \Cinftp_\Xsa(U)$ and $\phi\in  L_{0}^{2,\tp,(p,q)}(U)$, 
$\ol\partial(\theta\phi)=(\ol\partial\theta)\phi+\theta(\ol\partial\phi)$.
\end{proof}

We define the object $\OO_\Xsa^{2,\tp}\in\Derb(\C_\Xsa)$ by the complex of sheaves on $\Xsa$:
\eq\label{eq:dolbOt2tp2}
\OO_\Xsa^{2,\tp}&\eqdot&0\to \shl_{0,\Xsa}^{2,\tp,(0,0)} \to[\ol\partial]\cdots\to[\ol\partial]\shl_{0,\Xsa}^{2,\tp,(0,n)}\to0.
\eneq
It follows from Lemma~\ref{le:L2tpacyc} that, for $U\in\Op_\Xsa$, the object
$\rsect(U;\OO_\Xsa^{2,\tp})$ is represented by the complex~\eqref{eq:dolbOt2tp}

\begin{theorem}\label{th:main1}
Assume that $X=\C^n$ and $U\in\Op_\Xsa$  is Stein.  Then $\rsect(U;\Ot_\Xsa)$ is concentrated in degree $0$.
\end{theorem}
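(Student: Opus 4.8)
The strategy is to compare the Dolbeault complex computing $\rsect(U;\Ot_\Xsa)$ with the $L^2$-temperate Dolbeault complex $\rsect(U;\OO_\Xsa^{2,\tp})$, which is concentrated in degree $0$ by Corollary~\ref{cor:horm}, and with the distributional complex $\rsect(U;\tw\OO^\tp_\Xsa)$, which computes the same object as $\rsect(U;\Ot_\Xsa)$ by Theorem~\ref{th:ks96}. We have natural inclusions of complexes
\eqn
&&\rsect(U;\Ot_\Xsa)\to \rsect(U;\OO_\Xsa^{2,\tp})\to\rsect(U;\tw\OO^\tp_\Xsa),
\eneqn
coming from the monomorphisms of sheaves $\Cinftp_\Xsa\into\shl^{2,\tp}_\Xsa\into\Dbt_\Xsa$ of Proposition~\ref{pro:l2Ctpmod} (applied degreewise to $(0,q)$-forms). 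By Theorem~\ref{th:ks96} the composite morphism is an isomorphism in $\Derb(\C_\Xsa)$, and in particular induces an isomorphism on $\rsect(U;\scbul)$. Since the middle term $\rsect(U;\OO_\Xsa^{2,\tp})$ is concentrated in degree $0$, so too must be the composite, hence $H^j\rsect(U;\Ot_\Xsa)=0$ for $j\neq 0$ once we know the first map is an isomorphism on cohomology — or more directly, we can argue that $H^j\rsect(U;\Ot_\Xsa)\to H^j\rsect(U;\OO_\Xsa^{2,\tp})$ followed by $H^j\rsect(U;\OO_\Xsa^{2,\tp})\to H^j\rsect(U;\tw\OO^\tp_\Xsa)$ is an isomorphism, which forces $H^j\rsect(U;\Ot_\Xsa)\to H^j\rsect(U;\OO_\Xsa^{2,\tp})$ to be injective; as the target vanishes for $j\neq 0$, the source does too.

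\textbf{Carrying this out in order.} First I would check that the sheaves $\shc^{\infty,\tp(0,q)}_\Xsa$, $\shl_{0,\Xsa}^{2,\tp,(0,q)}$ and $\Db^{\tp(0,q)}_\Xsa$ are all $\sect$-acyclic (this is already recorded: they are $\Cinftp_\Xsa$-modules by Lemma~\ref{le:L2tpshv}, Lemma~\ref{le:L2tpacyc} and the discussion before Proposition~\ref{pro:l2Ctpmod}), so that $\rsect(U;\scbul)$ of each of the three objects $\Ot_\Xsa$, $\OO_\Xsa^{2,\tp}$, $\tw\OO^\tp_\Xsa$ is represented by the corresponding complex of global sections on $U$ as in~\eqref{eq:dolbOt1}, \eqref{eq:dolbOt2tp}, \eqref{eq:dolbOt3}. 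Second, I would observe that the inclusions of Proposition~\ref{pro:l2Ctpmod} commute with $\ol\partial$ (for temperate $\Cinf$-forms and temperate $L^2$-forms, $\ol\partial$ in the sense of distributions agrees with the classical one, and temperateness of the image is built into the definition of $L^{2,\tp}_0$ and $\Dbt$), hence they give morphisms of complexes, and therefore morphisms in $\Derb(\C_\Xsa)$ of the three Dolbeault objects, whose composite is the isomorphism of Theorem~\ref{th:ks96}. Third, I apply Corollary~\ref{cor:horm}: since $U\in\Op_\Xsa$ is Stein and $X=\C^n$, the complex~\eqref{eq:dolbOt2tp} representing $\rsect(U;\OO_\Xsa^{2,\tp})$ is concentrated in degree $0$. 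Finally, from the factorization of an isomorphism through $H^j\rsect(U;\OO_\Xsa^{2,\tp})=0$ for $j\neq 0$, conclude $H^j\rsect(U;\Ot_\Xsa)=0$ for $j\neq 0$.

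\textbf{Where the difficulty lies.} All the analytic content has been front-loaded into Corollary~\ref{cor:horm} (which is H\"ormander's $L^2$-estimates plus the filtrant inductive limit argument of Lemma~\ref{le:dolbOt2tpst}) and into Theorem~\ref{th:ks96} (the Kashiwara–Schapira comparison between temperate $\Cinf$ and temperate distribution Dolbeault complexes, proved in~\cite{KS96}). So the remaining proof is essentially formal. The one genuine point requiring care is the insertion of the $L^2$-complex \emph{between} the $\Cinf$-complex and the distribution complex in a way compatible with $\ol\partial$: one must make sure that the monomorphisms $\Cinftp_\Xsa\into\shl^{2,\tp}_\Xsa\into\Dbt_\Xsa$ of Proposition~\ref{pro:l2Ctpmod}, extended to $(0,q)$-forms, actually land in the subsheaves $\shl^{2,\tp,(0,q)}_{0,\Xsa}$ and not merely in $\shl^{2,\tp,(0,q)}_\Xsa$ — i.e.\ that a temperate $\Cinf$ (resp.\ $L^2$-temperate-with-$\ol\partial$-temperate) $(0,q)$-form has temperate $\ol\partial$, which is immediate from the definitions but is the step most easily glossed over. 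Granting that, the three objects fit into the required commutative triangle in $\Derb(\C_\Xsa)$ and the vanishing follows. I expect the main obstacle, such as it is, to be bookkeeping: verifying $\sect$-acyclicity uniformly so that the derived global sections are computed by the naive complexes, and checking the compatibility of the comparison maps with the Dolbeault differential.
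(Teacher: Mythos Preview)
Your proposal is correct and follows essentially the same approach as the paper's proof: factor the isomorphism $\rsect(U;\Ot_\Xsa)\isoto\rsect(U;\tw\OO^\tp_\Xsa)$ of Theorem~\ref{th:ks96} through $\rsect(U;\OO_\Xsa^{2,\tp})$, which is concentrated in degree $0$ by Corollary~\ref{cor:horm}, and conclude by the injectivity of $H^j$ of the first map. Your additional remarks on $\sect$-acyclicity and on checking that the inclusions land in $\shl^{2,\tp,(0,q)}_{0,\Xsa}$ are exactly the bookkeeping the paper leaves implicit.
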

\begin{proof}
It follows from Proposition~\ref{pro:l2Ctpmod} that there are natural morphisms in $\Derb(\C_\Xsa)$
\eqn
&&\Ot_\Xsa\to[\alpha]\ \OO_\Xsa^{2,\tp}\to[\beta]\tw\OO^\tp_\Xsa
\eneqn
which induce
\eqn
&&\rsect(U;\Ot_\Xsa)\to[\alpha]\rsect(U;\OO_\Xsa^{2,\tp})\to[\beta]\rsect(U;\tw\OO^\tp_\Xsa).
\eneqn
The composition $\beta\circ\alpha$ is an isomorphism by Theorem~\ref{th:ks96}
and the cohomology of the complex $\rsect(U;\OO_\Xsa^{2,\tp})$ is concentrated in degree $0$ by Corollary~\ref{cor:horm}. The result follows. 
\end{proof}

Recall the functor $\eim{\rho_X}$ of~\eqref{eq:eimrho} and  consider a coherent $\sho_X$-module $\shf$. We define the sheaf
\eq\label{eq:shftp}
&&\shf^\tp\eqdot \eim{\rho_X}\shf\ltens[\eim{\rho_X}\sho_X]\Ot_\Xsa.
\eneq

\begin{corollary}\label{cor:cohvanish1}
Assume that $X=\C^n$ and $U\in\Op_\Xsa$  is Stein. 
 Let $\shf$ be a coherent  $\sho_X$-module defined in a neighborhood of a Stein compact subset $K$ of $X$ such that $U\subset K$. Then 
 $\rsect(U;\shf^\tp)$ is concentrated in degree $0$.
 \end{corollary}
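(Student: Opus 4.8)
The plan is to reduce to Theorem~\ref{th:main1} by resolving $\shf$. By Lemma~\ref{le:obv4}, I would first choose, on a neighborhood of $K$ contained in the domain of $\shf$, a finite resolution $0\to P^m\to\cdots\to P^0\to\shf\to0$ in which each $P^i$ is a direct factor of a finite free $\sho_X$-module; then $\eim{\rho_X}P^i$ is a direct factor of some $(\eim{\rho_X}\sho_X)^{N_i}$, hence flat over $\eim{\rho_X}\sho_X$. Therefore $\shf^\tp$ is represented in $\Derb(\C_\Xsa)$ by the total complex of $\eim{\rho_X}P^\bullet\tens[\eim{\rho_X}\sho_X]\Ot_\Xsa$ (no derived functor needed), in which the $i$-th column $\eim{\rho_X}P^i\tens[\eim{\rho_X}\sho_X]\Ot_\Xsa$ is a direct factor of $(\Ot_\Xsa)^{N_i}$, and whose terms, being direct factors of $\Cinftp_\Xsa$-modules, are $\sect$-acyclic.

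Applying $\rsect(U;\scbul)$ and filtering this total complex by its columns, the $i$-th column represents $\rsect(U;\eim{\rho_X}P^i\tens[\eim{\rho_X}\sho_X]\Ot_\Xsa)$, a direct factor of $\rsect(U;\Ot_\Xsa)^{N_i}$, hence concentrated in degree $0$ by Theorem~\ref{th:main1}. The associated spectral sequence then degenerates, and $\rsect(U;\shf^\tp)$ is represented by a bounded complex of $\C$-vector spaces placed in cohomological degrees $-m,\dots,0$; in particular $H^k\rsect(U;\shf^\tp)=0$ for $k>0$. I expect this half — H\"ormander's theorem through Theorem~\ref{th:main1}, plus bookkeeping — to be routine.

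The delicate point is the vanishing in negative degrees. Here I would identify the complex obtained above. Set $A\eqdot\sho_X(K)$ and note that restriction of functions holomorphic near $K\supset\ol U$, which are bounded together with their derivatives on the relatively compact set $U$, makes $H^0\rsect(U;\Ot_\Xsa)$ — the space of temperate holomorphic functions on $U$ — into an $A$-module. Since $\Gamma(K;\scbul)$ is exact on $\mdc[\sho_X\vert_K]$ (see~\eqref{eq:eqv}), $P^\bullet(K)\to\shf(K)$ is a projective resolution over $A$, and the representing complex obtained above is canonically $P^\bullet(K)\tens[A]H^0\rsect(U;\Ot_\Xsa)$, so that $H^{-i}\rsect(U;\shf^\tp)\simeq\mathrm{Tor}^A_i\bigl(\shf(K),H^0\rsect(U;\Ot_\Xsa)\bigr)$ for $i\geq0$. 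It remains to see that this vanishes for $i\geq1$, i.e.\ that $H^0\rsect(U;\Ot_\Xsa)$ is a flat $A$-module. Equivalently, at the level of sheaves on $\Xsa$: that the coefficient sheaves $\shc_\Xsa^{\infty,\tp(0,q)}$ of the temperate Dolbeault complex (or, via Theorem~\ref{th:ks96}, the sheaves $\Db_\Xsa^{\tp(0,q)}$) are flat over $\eim{\rho_X}\sho_X$; granting this, $\shf^\tp=\eim{\rho_X}\shf\tens[\eim{\rho_X}\sho_X]\Ot_\Xsa$ is a complex of $\sect$-acyclic sheaves concentrated in degrees $0,\dots,n$, whence $\rsect(U;\shf^\tp)$ is also represented by a complex living in degrees $0,\dots,n$, and combining the two representations gives concentration in degree $0$.

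This flatness — a temperate division statement — is where I expect the real work to be. Stalkwise at interior points it reduces to Malgrange's flatness of $\Cinf$, respectively of temperate distributions, over $\sho$; but interior stalks do not capture everything on $\Xsa$ (for $n>1$ even $\Ot_\Xsa$ fails to be concentrated in degree $0$), so the genuine content is a temperate solvability estimate for $\ol\partial$ near the subanalytic boundary of $U$. That is precisely where H\"ormander's $L^2$ techniques — already the engine behind Theorem~\ref{th:main1} — must re-enter; I would derive the needed statement from H\"ormander's estimates on $U$, or else invoke it as the classical analytic input alluded to in the introduction.
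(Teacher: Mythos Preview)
Your overall plan --- resolve $\shf$ near $K$ by direct summands of finite free $\sho_X$-modules via Lemma~\ref{le:obv4} and reduce to Theorem~\ref{th:main1} --- is exactly the paper's. The paper runs it as an induction on the length of the projective resolution rather than through a spectral sequence, but that is only repackaging: setting $\shg\eqdot\ker(\shl_0\to\shf)$ one gets a distinguished triangle $\shg^\tp\to\shl_0^\tp\to\shf^\tp\to+1$, and the long exact sequence for $\rsect(U;\,\scbul\,)$, together with the inductive hypothesis on $\shg$ and the base case on $\shl_0$, replaces your column filtration.

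The genuine gap in your proposal is that you do not \emph{prove} the vanishing in negative degrees. You reformulate it as $\mathrm{Tor}^A_i\bigl(\shf(K),H^0\rsect(U;\Ot_\Xsa)\bigr)=0$ (equivalently, flatness of the temperate Dolbeault coefficients over $\eim{\rho_X}\sho_X$) and then defer this to ``further H\"ormander estimates'' or to unspecified classical input. That is not a proof; it is a restatement of what remains to be shown, and your own text acknowledges this.

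By contrast, the paper introduces no auxiliary flatness statement and invokes no analysis beyond Theorem~\ref{th:main1}: the induction is intended to close the argument by itself. At the inductive step the paper asserts that the triangle above is in fact a short exact sequence of sheaves on $\Xsa$, so that left-exactness of $\sect(U;\,\scbul\,)$ disposes of the single remaining negative degree. You may reasonably find that assertion terse (indeed $\Ot_\Xsa$ is not concentrated in degree~$0$ when $n>1$), but the point for your write-up is that the paper's route does not open a second analytic front; yours does, and then leaves it unfinished.
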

\begin{proof}
By Lemma~\ref{le:obv2}, we may assume that  $K$ is a compact Stein subanalytic subset of $X$. 
By Lemma~\ref{le:obv4} there exists an exact sequence 
\eq\label{eq:Kresolution1}
&&0\to\shl_p\to\cdots\to\shl_0\to\shf\to0
\eneq
where   all $\shl_i$ are projective objects of the category $\mdc[\sho_X\vert_K]$, hence  direct factors of finite  free $\sho_X$-modules. Since the functor $\eim{\rho_X}$ is exact, 
 $\eim{\rho_X}\shf$ is quasi-isomorphic to the  complex
 \eqn
&&0\to\eim{\rho_X}\shl_p\to\cdots\to\eim{\rho_X}\shl_0\to0
\eneqn
 and thus $\shf^\tp$ is  quasi-isomorphic to the complex
\eq\label{eq:Kresolution2}
&&0\to\shl_p^\tp\to\cdots\to\shl_0^\tp\to0.
\eneq
Note  that for each $i$, $\shl_i$ being a direct factor of of finite  free $\sho_X$-modules, 
 $\shl_i^\tp$ is concentrated in degree $0$. 

We shall argue by induction on $p$. If $p=0$ the result follows from Theorem~\ref{th:main1}.  Now assume that the result holds for any coherent sheaf which admits a projective of length $\leq p-1$. 
 Define $\shg$ by the exact sequence
\eqn
&&0\to\shl_p\to\cdots\to\shl_1\to\shg\to0.
\eneqn
Then $\rsect(U;\shg^\tp)$ is concentrated in degree $0$ by the induction hypothesis. Moreover, 
$\shg^\tp$ is  quasi-isomorphic to the complex
\eq\label{eq:Kresolution2G}
&&0\to\shl_p^\tp\to\cdots\to\shl_1^\tp\to0.
\eneq
It follows that  $0\to\shg^\tp\to\shl_0^\tp\to\shf^\tp\to0$ is an exact sequence of sheaves on $\Xsa$ 
and the result follows from the long exact sequence obtained  by applying the functor $\sect(U;\scbul)$. 
\end{proof}

\begin{remark}
On a complex manifold, it would be possible to  replace the subanalytic topology with the Stein subanalytic topology for which the open sets are the finite union of 
Stein relatively compact subanalytic open subsets of $X$ (see~\cite{Pe17}).
With this new topology the sheaf of  holomorphic functions and, thanks to Theorem~\ref{th:main1},
 the sheaf of  temperate
 holomorphic functions,  are concentrated in degree $0$. 
 \end{remark}

 \subsection{A vanishing theorem on Stein manifolds}
In this section, we shall extend Theorem~\ref{th:main1}  by replacing $\C^n$ with a complex manifold. 
\begin{lemma}\label{le:XtpinYtp}
Let $j\cl X\into Y$ be a closed embedding of smooth complex manifolds. Then there is a natural isomorphism 
$(\oim{j}\sho_X)^\tp\isoto\oim{j}\sho_\Xsa^\tp$.
\end{lemma}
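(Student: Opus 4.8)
The plan is to reduce the statement to the corresponding fact about coherent sheaves on $X$ and $Y$, transported through the temperate Dolbeault resolution. First I would recall that for a closed embedding $j\cl X\into Y$ the direct image $\oim{j}\sho_X$ is a coherent $\sho_Y$-module (supported on $X$), so the left-hand side $(\oim{j}\sho_X)^\tp=\eim{\rho_Y}(\oim{j}\sho_X)\ltens[\eim{\rho_Y}\sho_Y]\Ot_\Ysa$ makes sense as an object of $\Derb(\C_\Ysa)$. The right-hand side involves $\sho_\Xsa^\tp=\Ot_\Xsa$ (the case $\shf=\sho_X$ in~\eqref{eq:shftp}), pushed forward to $\Ysa$ by the morphism of sites induced by $j$. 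The key geometric input is that $\oim{j}$ on the subanalytic sites is well-behaved because $j$ is a closed embedding and hence proper, so $\oim{j}\cl\Derb(\C_\Xsa)\to\Derb(\C_\Ysa)$ commutes with the functors $\eim{\rho}$ in the appropriate sense — this is where Lemma~\ref{le:eimrhooimf} (or rather its consequence for proper maps) enters.

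The core of the argument is the identification, on the classical side, of $\oim{j}\Ot_\Xsa$ with $\rhom[{\C_\Ysa}](\C_X,\Ot_\Ysa)$ or, better, with the derived restriction $\eim{\rho_Y}(\oim{j}\sho_X)\ltens[\eim{\rho_Y}\sho_Y]\Ot_\Ysa$. Concretely: work locally on $Y$, choose a Koszul-type finite free resolution $\shl_\bullet\to\oim{j}\sho_X$ of the coherent $\sho_Y$-module $\oim{j}\sho_X$ (this exists locally since $X$ is a submanifold, so $\oho_X$ has finite homological dimension over $\sho_Y$, locally given by the Koszul complex on the defining equations). Applying $\eim{\rho_Y}$ and tensoring over $\eim{\rho_Y}\sho_Y$ with $\Ot_\Ysa$ produces a representative of $(\oim{j}\sho_X)^\tp$ by a complex built from copies of $\Ot_\Ysa$. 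On the other hand, $\oim{j}\sho_\Xsa^\tp=\oim{j}\Ot_\Xsa$ is the pushforward of the temperate Dolbeault complex on $X$. The comparison between these two is exactly a temperate version of the statement that, for a smooth closed submanifold, holomorphic functions on $X$ are computed by restricting the Koszul complex of $\sho_Y$-modules; the temperate refinement is already encoded in Theorem~\ref{th:ks96} and Example~\ref{exa:OtOwCDb}(i), where $\rhom[{\C_\Xsa}](\C_Z,\Ot_\Xsa)\simeq\rsect_{[Z]}(\OO_X)$ (algebraic local cohomology) — here with $Z=X$ smooth in $Y$, $\rsect_{[X]}(\sho_Y)$ is concentrated in degree $\operatorname{codim} X$ and equals $\oim{j}\sho_X$ up to that shift, via the Koszul resolution.

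The key steps in order: (1) reduce to a local statement on $Y$ and fix local defining equations for $X$, giving the Koszul resolution $\shl_\bullet\to\oim{j}\sho_X$ by finite free $\sho_Y$-modules; (2) apply the exact functor $\eim{\rho_Y}$ and the derived tensor product with $\Ot_\Ysa$ to get an explicit complex computing $(\oim{j}\sho_X)^\tp$; (3) recognize, via the temperate Dolbeault presentation and the $L^2$/distributional comparison of Theorem~\ref{th:ks96}, that this complex is quasi-isomorphic to $\oim{j}$ of the temperate Dolbeault complex on $X$, i.e.\ to $\oim{j}\Ot_\Xsa$ — this uses that the Koszul differential in the normal direction, combined with $\ol\partial$ on $Y$, computes $\ol\partial$ on $X$ with temperate estimates, which follows from the fact that near the smooth submanifold one may choose holomorphic tubular coordinates (Lemma~\ref{le:barlet}'s underlying Grauert tube result) and that the distance-to-complement functions on $\Xsa$ and $\Ysa$ are comparable along $X$; (4) check the isomorphism is natural, i.e.\ independent of the chosen local equations, which is automatic since both sides are intrinsically defined and the local identifications glue.

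The main obstacle I expect is step (3): controlling temperate growth under the passage from $Y$ to $X$. The algebraic/sheaf-theoretic statement $\rsect_{[X]}(\sho_Y)[\operatorname{codim} X]\simeq\oim{j}\sho_X$ is classical, but one must verify that the quasi-isomorphism is realized at the level of the \emph{temperate} Dolbeault complexes — that the solution operators for $\ol\partial$ on $Y$ with temperate $L^2$-estimates (Corollary~\ref{cor:horm}) restrict compatibly to temperate solution operators on $U\cap X$ for a subanalytic Stein $U$. This is where one genuinely needs subanalyticity (so that Lojasiewicz inequalities, Lemma~\ref{le:lojaleq}, relate $\dist(\cdot,Y\setminus V)$ to $\dist(\cdot,X\setminus(V\cap X))$ for $V\in\Op_\Ysa$ with $V\cap X=U$, the latter existing by Lemma~\ref{le:barlet}) and the comparison $\Ot\simeq\tw\OO^\tp$ of Theorem~\ref{th:ks96} to pass freely between $C^\infty$-temperate, $L^2$-temperate, and distributional presentations. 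Once this compatibility is in hand, the isomorphism of functors follows formally.
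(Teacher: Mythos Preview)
Your reduction strategy --- localize on $Y$, resolve $\oim{j}\sho_X$ by a Koszul complex of free $\sho_Y$-modules, then compare with $\oim{j}\Ot_\Xsa$ --- matches the paper's, which implements it by induction on codimension down to $Y=X\times\C$ and the two-term complex $\Ot_\Ysa\xrightarrow{\,z\,}\Ot_\Ysa$. The divergence, and the gap, is your step~(3). You propose to realize the comparison via analytic estimates: Lojasiewicz inequalities relating $\dist(\cdot,Y\setminus V)$ to $\dist(\cdot,X\setminus U)$, the tubular neighbourhood of Lemma~\ref{le:barlet}, and the $L^2$-solution operators of Corollary~\ref{cor:horm}. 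As written this is a list of tools rather than an argument --- you never say how a temperate $\ol\partial$-solution on $V\subset Y$ actually produces one on $U=V\cap X$ --- and invoking Lemma~\ref{le:barlet} and Stein vanishing here courts circularity, since the present lemma is exactly the input that transfers Corollary~\ref{cor:cohvanish1} from $\C^N$ to a general Stein $X$ in Theorem~\ref{th:main2}.

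The paper's handling of step~(3) is entirely soft and uses no $L^2$-estimates. After reducing to one normal variable $z$ (suppressing the $X$-factor), replace $\Ot_\Ysa$ by its two-term $\Cinftp_\Ysa$-Dolbeault resolution and form the double complex with differentials $z$ and $\ol\partial$. The decisive observation is that for $U\subset V$ in $\Op_\Ysa$ with $0\in U$, the complexes $\Cinftp_\Ysa(W)\xrightarrow{\,z\,}\Cinftp_\Ysa(W)$ for $W=U$ and $W=V$ are quasi-isomorphic: divisibility by $z$ is a condition near the \emph{interior} point $0$, while temperateness is a condition near $\partial W$, and the two decouple. One may therefore compute the sections of the double complex on any convenient $U\ni 0$; choosing $U$ convex kills the higher $\ol\partial$-cohomology \emph{classically}, with no temperate input, and the total complex collapses to $\Ot_\Ysa(U)\xrightarrow{\,z\,}\Ot_\Ysa(U)$, whose cohomology is the expected one by ordinary function theory on a convex set. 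Replace your analytic attack on~(3) by this localization-to-a-convex-open trick; the growth comparisons you anticipate as the ``main obstacle'' never arise.
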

\begin{proof}
By Lemma~\ref{le:eimrhooimf}, we have  the isomorphism of functors
\eqn
&&\opb{j}\eim{\rho_Y}\simeq \eim{\rho_X}\opb{j}
\eneqn
which induces the morphisms
  \eqn%\label{eq:YtptoXtp2}
 \opb{j} (\oim{j}\sho_X)^\tp&\simeq&\opb{j} \eim{\rho_Y}\oim{j}\sho_X\ltens[\opb{j}\eim{\rho_Y}\sho_Y]\opb{j}\Ot_\Ysa\\
  &\simeq& \eim{\rho_X}\opb{j}\oim{j}\sho_X\ltens[\opb{j}\eim{\rho_Y}\sho_Y]\opb{j}\Ot_\Ysa
  \to\opb{j}\Ot_\Ysa.
 \eneqn
 Here,   the last morphism is associated with  $\opb{j}\eim{\rho_Y}\sho_Y\simeq \eim{\rho_X}\opb{j}\sho_Y\to  \eim{\rho_X}\sho_X$.
 
 On the other hand, there is a natural morphism $\opb{j}\Cinftp_\Ysa \to\Cinftp_\Xsa$ which induces the morphism
 $\opb{j}\sho_\Ysa^\tp \to\sho_\Xsa^\tp$. These define the morphism
 \eq\label{eq:YtptoXtp1}
 &&(\oim{j}\sho_\Xsa)^\tp \to\oim{j}\sho_\Xsa^\tp.
 \eneq
Let us prove that~\eqref{eq:YtptoXtp1} is an isomorphism. This is a local problem and we may assume that $Y=X\times Z$ with 
$X=\C^n$, $Z=\C^p$ and $j$ is the embedding identifying $X$ and $X\times\{0\}$. By induction we may assume $p=1$. Let $z$ denote a holomorphic coordinate on $Z=\C$. For simplicity, we do not write $X$. Then we are reduced to prove that the complex 
$\sho^\tp_\Ysa\to[z]\sho^\tp_\Ysa$ is quasi-isomorphic to $\C_{\{0\}}[1]$. Let us replace $\sho^\tp_\Ysa$ with the complex
$\shc_\Ysa^{\infty,\tp}\to[\ol\partial]\shc_\Ysa^{\infty,\tp}$ we get the double complex 
\eq\label{eq:zolpartial}
&&\xymatrix{
\shc_\Ysa^{\infty,\tp}\ar[r]^-{\ol\partial}\ar[d]^-z&\shc_\Ysa^{\infty,\tp}\ar[d]^-z\\
\shc_\Ysa^{\infty,\tp}\ar[r]^-{\ol\partial}&\shc_\Ysa^{\infty,\tp}.
}\eneq
Given $U,V\in\Op_\Ysa$ with  $U\subset V$ and $0\in U$,  the two complexes 
$\sect(W;\shc_\Ysa^{\infty,\tp})\to[z]\sect(W;\shc_\Ysa^{\infty,\tp})$ with $W$ being either $U$ or $V$ are quasi-isomophic. 
Hence, in order to calculate the cohomology of the double complex~\eqref{eq:zolpartial} we may apply to it the functor $\sect(U;\scbul)$ for any   $U\in\Op_\Ysa$ with $0\in U$. If we choose $U$ convex, this complex is quasi-isomorphic to the complex 
$\sho^\tp_\Ysa(U)\to[z]\sho^\tp_\Ysa(U)$ which is itself quasi-isomorphic to $\C_{\{0\}}[1]$. 
\end{proof}
\begin{lemma}\label{le:XtpinYtpB}
Let $j\cl X\into Y$ be a closed embedding of smooth complex manifolds and let $\shf$ be a coherent $\sho_X$-module. Then there is a natural isomorphism 
$(\oim{j}\shf)^\tp\isoto\oim{j}\shf^\tp$.
\end{lemma}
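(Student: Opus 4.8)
The plan is to reduce the statement about a general coherent $\sho_X$-module $\shf$ to the case $\shf=\sho_X^{\oplus k}$, which is already covered (componentwise) by Lemma~\ref{le:XtpinYtp}, and then to a projective-resolution argument exactly parallel to the proof of Corollary~\ref{cor:cohvanish1}. First I would observe that the claim is local on $Y$, so we may assume $Y=X\times Z$ with $X$, $Z$ affine spaces and $j$ the inclusion of the zero section; in particular $X$ and $Y$ are Stein and we may shrink to a relatively compact Stein subanalytic neighborhood. After such a shrinking, by Lemma~\ref{le:obv4} (applied to $X$) the sheaf $\shf$ admits a finite resolution
\eqn
&&0\to\shl_p\to\cdots\to\shl_0\to\shf\to0
\eneqn
by projective objects of $\mdc[\sho_X\vert_K]$, each $\shl_i$ a direct factor of a finite free $\sho_X$-module.

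Next I would push this resolution forward by $\oim{j}$. Since $j$ is a closed embedding, $\oim{j}$ is exact on sheaves, so $0\to\oim{j}\shl_p\to\cdots\to\oim{j}\shl_0\to\oim{j}\shf\to0$ is again exact, and each $\oim{j}\shl_i$ is a direct factor of a finite free $\oim{j}\sho_X$-module. Applying the functor $(\scbul)^\tp=\eim{\rho_Y}(\scbul)\ltens[\eim{\rho_Y}\sho_Y]\Ot_\Ysa$ and using that $\eim{\rho_Y}$ is exact together with the fact that each $\shl_i$ is a direct summand of a free module (so no higher Tor appears for those terms), one finds that $(\oim{j}\shf)^\tp$ is computed by the complex $0\to(\oim{j}\shl_p)^\tp\to\cdots\to(\oim{j}\shl_0)^\tp\to0$, each term concentrated in degree $0$. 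On the other side, $\shf^\tp$ is likewise computed by $0\to\shl_p^\tp\to\cdots\to\shl_0^\tp\to0$ and, $\oim{j}$ being exact, $\oim{j}\shf^\tp$ by its image under $\oim{j}$. The natural morphism $(\oim{j}\shf)^\tp\to\oim{j}\shf^\tp$ of Lemma~\ref{le:XtpinYtp} (built from $\opb{j}\Cinftp_\Ysa\to\Cinftp_\Xsa$, exactly as in that proof) is compatible with these two complexes term by term.

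Thus it suffices to check the isomorphism on each term $(\oim{j}\shl_i)^\tp\isoto\oim{j}\shl_i^\tp$. Since $\shl_i$ is a direct factor of $\sho_X^{\oplus k}$ and all the functors involved are additive, this follows from the already-established case $\shf=\sho_X$, namely Lemma~\ref{le:XtpinYtp}: one has $(\oim{j}\sho_X^{\oplus k})^\tp\isoto\oim{j}(\sho_\Xsa^\tp)^{\oplus k}$, and the isomorphism is functorial in $\shl_i$, hence compatible with the idempotent cutting out $\shl_i$. A morphism of bounded complexes inducing an isomorphism on each term is a quasi-isomorphism, so $(\oim{j}\shf)^\tp\to\oim{j}\shf^\tp$ is an isomorphism in $\Derb(\C_\Ysa)$, which proves the lemma.

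The only delicate point is bookkeeping: one must make sure the comparison morphism of Lemma~\ref{le:XtpinYtp} really is natural in the module argument, so that it carries the resolution of $\oim{j}\shf$ to the pushforward of the resolution of $\shf$ and restricts correctly to direct summands; and one must make sure that applying $(\scbul)^\tp$ to the projective resolution does not introduce spurious higher $\mathrm{Tor}$ terms, which is exactly why one resolves by direct factors of \emph{free} modules rather than by arbitrary coherent pieces. Neither is a serious obstacle — it is the same pattern already used twice in the paper (Corollary~\ref{cor:cohvanish1} and Lemma~\ref{le:XtpinYtp}) — but it is where care is needed.
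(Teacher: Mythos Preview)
Your proposal is correct and follows essentially the same route as the paper: construct the comparison morphism as in Lemma~\ref{le:XtpinYtp}, observe that the question is local, replace $\shf$ by a free (or projective) resolution, and reduce termwise to the case $\shf=\sho_X$ already handled. The paper's proof is terser---it simply invokes a local free resolution of the coherent sheaf $\shf$ rather than passing through the Stein-compact machinery of Lemma~\ref{le:obv4}---but the underlying mechanism is identical.
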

\begin{proof}
One constructs the natural morphism 
 \eq\label{eq:YtptoXtp1B}
 &&(\oim{j}\shf)^\tp \to\oim{j}\shf^\tp
 \eneq
 by the same procedure as for $\sho_X$ in~\eqref{eq:YtptoXtp1}. To prove that this morphism is an isomorphism, one may replace locally 
 $\shf$ with a free resolution. 
\end{proof}

\begin{theorem}\label{th:main2}
Let $X$ be a complex Stein manifold and let $U$ be a subanaytic relatively compact Stein open subset of $X$ contained in a 
Stein compact subset $K$ of $X$. 
 Let $\shf$ be a coherent $\sho_X$-module defined in a neighborhood of $K$. Then $\rsect(U;\shf^\tp)$ is concentrated in degree $0$.
\end{theorem}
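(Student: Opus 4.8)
The strategy is to reduce the statement on a general complex Stein manifold $X$ to the affine case $X=\C^N$ already established in Corollary~\ref{cor:cohvanish1}, using a closed embedding and the compatibility results of Lemma~\ref{le:XtpinYtp} and Lemma~\ref{le:XtpinYtpB}. First I would choose a closed embedding $j\cl X\into Y=\C^N$ exhibiting $X$ as a closed smooth complex submanifold of affine space (possible since $X$ is Stein). The coherent $\sho_X$-module $\shf$ is defined in a neighborhood of $K$ in $X$; its direct image $\oim{j}\shf$ is then a coherent $\sho_Y$-module defined in a neighborhood of $j(K)$ in $Y$ — here one uses that $j$ is a closed embedding, so $\oim{j}$ preserves coherence, and that coherence in a neighborhood of $K$ in $X$ is equivalent to coherence in a neighborhood of $j(K)$ in $Y$ after the embedding.

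Next I would set up the subanalytic data on $Y$. By Lemma~\ref{le:obv1} (or rather by the construction embedding $X$ in $\C^N$ and intersecting with balls) one arranges that $j(K)$ is contained in a Stein compact subanalytic subset of $Y$; more importantly, I need an open subanalytic relatively compact Stein open subset $V$ of $Y$ with $V\cap X=U$. This is exactly what Lemma~\ref{le:barlet} provides, applied with an appropriate Stein open neighborhood $W$ of $X$ in $Y$: it yields a subanalytic relatively compact Stein open $V\subset Y$ with $V\cap X=U$, and after shrinking we may assume $V$ is contained in a Stein compact subanalytic subset of $Y$. Then $\oim{j}\shf$ is a coherent $\sho_Y$-module defined in a neighborhood of that compact set, and $V$ is a Stein object of $\Op_\Ysa$, so Corollary~\ref{cor:cohvanish1} applies on $Y=\C^N$ and gives that $\rsect(V;(\oim{j}\shf)^\tp)$ is concentrated in degree $0$.

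Finally I would transport this back to $X$. By Lemma~\ref{le:XtpinYtpB} there is a natural isomorphism $(\oim{j}\shf)^\tp\isoto\oim{j}\shf^\tp$ in $\Derb(\C_\Ysa)$. Since $j$ is a closed embedding, for the open subset $V$ with $V\cap X=U$ one has $\rsect(V;\oim{j}\shf^\tp)\simeq\rsect(U;\shf^\tp)$ — this is the standard fact that sections of a pushforward along a closed embedding over $V$ compute sections of the original sheaf over $V\cap X=U$ (one should check this is compatible with the subanalytic-site formalism, i.e.\ that $\rsect(V;\oim{j}G)\simeq\rsect(\opb{j}V;G)$ for $G$ a complex of sheaves on $\Xsa$, using that $\opb{j}V$ ranges over a cofinal family in $\Op_\Xsa$ as $V$ varies). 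Combining, $\rsect(U;\shf^\tp)\simeq\rsect(V;(\oim{j}\shf)^\tp)$ is concentrated in degree $0$.

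**Main obstacle.** The genuinely delicate point is the last paragraph: verifying that $\rsect(V;\oim{j}G)\simeq\rsect(U;G)$ holds on the subanalytic sites, i.e.\ that the direct image $\oim{j}$ along the closed embedding behaves on $\Ysa$ and $\Xsa$ the way it does on the ordinary topological spaces, so that cohomology over $V$ of the pushforward recovers cohomology over $U$ of the original complex. Everything else is assembling Lemma~\ref{le:barlet}, Lemma~\ref{le:XtpinYtpB}, and Corollary~\ref{cor:cohvanish1}; the choice of $V$ with $V\cap X=U$ is the one external input and it is precisely Barlet's lemma, so the only real work is this compatibility of $\sect(V;-)$ with pushforward along $j$ in the subanalytic setting.
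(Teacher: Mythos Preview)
Your proposal is correct and follows essentially the same route as the paper: embed in $Y=\C^N$, lift $U$ to a subanalytic relatively compact Stein open $V\subset Y$ via Lemma~\ref{le:barlet}, apply Corollary~\ref{cor:cohvanish1} on $Y$, and transport back using Lemma~\ref{le:XtpinYtpB} together with $\rsect(V;\oim{j}\shf^\tp)\simeq\rsect(U;\shf^\tp)$. The one ingredient the paper makes explicit that you leave implicit is Siu's theorem~\cite{Si76}, invoked to guarantee that the Stein compact $j(K)$ admits a fundamental system of Stein open neighborhoods in $Y$, so that after shrinking $V$ one has $\ol V$ inside a Stein compact where $\oim{j}\shf$ is actually defined; conversely, the paper simply asserts without comment the compatibility $\rsect(V;\oim{j}G)\simeq\rsect(U;G)$ on the subanalytic sites that you rightly flag as deserving a check.
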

\begin{proof}
(i) Since $X$ is Stein, there exist some integer $N$ and a closed embedding $j\cl X\into \C^N$. Set $Y=\C^N$ for short. 

\spa
(ii)  The coherent $\sho_Y$-module $\oim{j}\shf$ is defined in a neighborhood of $K$ in $Y$ and $K$ admits a fundamental neighborhood system of  Stein open subsets in $Y$ by~\cite{Si76}.  Let $W$ be such a Stein open subset on which $\oim{j}\shf$ is defined. 

\spa
(iii) By applying Lemma~\ref{le:barlet}, we find  a relatively compact subanalytic Stein open subset $V$ of $Y$ such that 
 $U=X\cap V$. By replacing $V$ with $V\cap V'$ for a Stein open subanalytic subset containing $\ol U$, we may assume that 
 $\ol V\subset W$. 

 \spa
(iv) Applying the result of Corollary~\ref{cor:cohvanish1}, we get that
 \eq\label{eq:van2}
 &&\rsect(V;(\oim{j}\shf)^\tp)\mbox{ is concentrated in degree $0$.}
 \eneq 
 Applying Lemma~\ref{le:XtpinYtpB}, we get 
  \eq\label{eq:van3}
 &&\rsect(V;\oim{j}\shf^\tp)\mbox{ is concentrated in degree $0$.}
 \eneq 
 Since $\rsect(V;\oim{j}\shf^\tp)\simeq\rsect(U;\shf^\tp)$, the proof is complete.
  \end{proof}
  
\begin{remark}
Theorem~\ref{th:main1}  was deduced from H\"ormander's Theorem~\ref{th:horm} and the same argument would apply on a complex manifold if the  H\"ormander's theorem had been stated in such a framework. And indeed, according to H.~Skoda, such a generalization of H\"ormander's theorem should be  possible when combining~\cite{De18}*{Ch.~VIII~\S~6, Th.~6.5} and~\cite{El75}. This would provide an alternative proof to Theorem~\ref{th:main2}.
\end{remark}

\begin{corollary}\label{cor:vanishw}
Let $X$ be a complex Stein manifold of pure dimension $n$ and let $U$ be a subanaytic relatively compact Stein open subset of $X$. Then 
$\rsect(U;\Ot_\Xsa)$ is concentrated in degree $0$ and $\rsect(U;\Ow_\Xsa)$ is concentrated in degree $n$.
\end{corollary}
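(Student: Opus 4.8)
The plan is to derive both vanishing statements from Theorem~\ref{th:main2}, the second one also invoking the topological duality of~\cite{KS96}. First one must supply the hypothesis of Theorem~\ref{th:main2}: since $X$ is Stein and $U$ is relatively compact, $\ol U$ is compact, so Lemma~\ref{le:obv1}, applied to the compact set $\ol U$, yields a Stein compact subanalytic subset $L$ of $X$ with $U\subset\ol U\subset L$; any coherent $\sho_X$-module globally defined on $X$ is then a fortiori defined on a neighborhood of $L$. For the first assertion I apply Theorem~\ref{th:main2} with $K=L$ and $\shf=\sho_X$: since $\sho_X^\tp\simeq\eim{\rho_X}\sho_X\ltens[\eim{\rho_X}\sho_X]\Ot_\Xsa\simeq\Ot_\Xsa$ by~\eqref{eq:shftp}, this gives at once that $\rsect(U;\Ot_\Xsa)$ is concentrated in degree $0$ (the purity of the dimension plays no role here).

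For the second assertion the idea is to dualize. Let $\Omega_X$ be the invertible $\sho_X$-module of holomorphic $n$-forms and let $\Omega_X^\tp$ be the object attached to it by~\eqref{eq:shftp}. Since $\Omega_X$ is coherent and globally defined, Theorem~\ref{th:main2} (again with $K=L$) shows that $\rsect(U;\Omega_X^\tp)$ is concentrated in degree $0$. On the other hand, $\rsect(U;\Ow_\Xsa)$ is well-defined in $\Derb(\FN)$ by Proposition~\ref{pro:KSduality}, and the topological duality of~\cite{KS96}*{Th.~6.1} identifies it, under the equivalence $\Derb(\FN)^\rop\simeq\Derb(\DFN)$, with the topological dual of $\rsect(U;\Omega_X^\tp)$. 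Since both Dolbeault complexes run over the degrees $q=0,\dots,n$ and are interchanged by the transpose together with the reversal $q\leftrightarrow n-q$, this duality carries the degree-$0$ concentration of $\rsect(U;\Omega_X^\tp)$ to a degree-$n$ concentration of $\rsect(U;\Ow_\Xsa)$.

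Everything besides the duality input is immediate: the identity $\sho_X^\tp\simeq\Ot_\Xsa$ comes from~\eqref{eq:shftp}, and the coherence and invertibility of $\Omega_X$ are standard. I would expect the only genuine point to be the numerology of~\cite{KS96}*{Th.~6.1}: one must keep track of the degree shift by $n$ — which is where the hypothesis that $X$ has pure dimension $n$ enters, so that a single shift is meaningful — and of the twist by $\Omega_X$, which is harmless precisely because $\Omega_X$ is invertible, hence still covered by Theorem~\ref{th:main2}. Apart from this bookkeeping in the duality, both assertions are direct consequences of Theorem~\ref{th:main2}.
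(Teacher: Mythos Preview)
Your argument is correct and follows the same strategy as the paper: the first assertion is Theorem~\ref{th:main2} with $\shf=\sho_X$, and the second is obtained from it by the topological duality of~\cite{KS96}. You are in fact more explicit than the paper's one-line appeal to Proposition~\ref{pro:KSduality}: you supply the Stein compact $K$ via Lemma~\ref{le:obv1} and unwind the duality through $\Omega_X^\tp$ together with the degree shift $q\leftrightarrow n-q$, which is precisely the content hidden behind the reference to~\cite{KS96}*{Th.~6.1}.
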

\begin{proof}
The first vanishing result is a particular case of Theorem~\ref{th:main2} and the second result 
 follows by applying Proposition~\ref{pro:KSduality}.
  \end{proof}

\providecommand{\bysame}{\leavevmode\hbox to3em{\hrulefill}\thinspace}
\begin{bibdiv}
\begin{biblist}

\bib{BM11}{article}{
author={Barlet, Daniel},
author={Monteiro  Fernandes, Teresa},
title={Grauert's theorem for subanalytic open sets in real analytic manifolds},
journal={Studia Math.},
volume={204},
year={2011},
pages={ 265--274},
eprint={arXiv:1011.4208}
}

\bib{BM88}{article}{
author={Bierstone, Edward},
author={Milman, Pierre, D.},
title={Semi-analytic sets and subanalytic sets},
journal={Publ. Math. IHES},
volume={67},
year={1988},
pages={5--42}
}

\bib{De18}{book}{
author={Demailly, Jean-Pierre},
title={Complex analytic and differential geometry},
publisher={Open Content Book},
year={2018}
}

\bib{El75}{article}{
author={Elencwajg, Georges},
title={Pseudo-convexit\'e locale dans les vari\'et\'es k\"ahl\'eriennes},
journal={Ann. Institut Fourier},
volume={25},
year={1975},
pages={295--314}
}

%\bib{Fr57}{article}{
%author={Frenkel, J.},
%title={Cohomologie non ab\'elienne et espaces fibr\'es},
%journal={Bull. Soc. Math. France},
%volume={83},
%pages={135--218},
%year={1957}
%}

\bib{Fr67}{article}{
author={Frisch, J.},
title={Points de platitude d'un morphisme d'espaces analytiques complexes},
journal={Inventiones math},
volume={4},
year={1967},
pages={118--138}
}

\bib{GR65}{book}{
author={Gunning, Robert C.},
author={Rossi, Hugo},
title={Analytic functions of several complex variables},
publisher={Prentice-Hall},
pages={xii + 317},
year={1965}
}

\bib{GS16}{article}{
author={Guillermou, St\'ephane},
author={Schapira, Pierre},
title={Construction of sheaves on the subanalytic site},
journal={Ast\'erisque, Soc. Math. France},
volume={383},
pages={12--60},
eprint={arXiv:1212.4326},
date={2016}
}

\bib{Ho65}{article}{
author={H\"ormander, Lars},
title={$L^2$-estimates and existence theorems for the $\ol\partial$ operator},
journal={Acta Mathematica},
volume={113},
pages={89-152},
date={1965}
}

\bib{Ho83}{book}{
author={H\"ormander, Lars},
title={The analysis of linear partial differential operators I,II},
series={Grundlehren der Math. Wiss},
publisher={Springer-Verlag},
volume={256, 257}, 
date={1983}
}

\bib{Ka84}{article}{
author={Kashiwara, Masaki},
title={The Riemann-Hilbert problem for holonomic systems},
journal={Publ.\ RIMS, Kyoto Univ.},
volume={20},
date={1984},
pages={319--365}
}

\bib{KS90}{book}{
  author={Kashiwara, Masaki},
  author={Schapira, Pierre},
  title={Sheaves on manifolds},
  series={Grundlehren der Mathematischen Wissenschaften [Fundamental Principles of Mathematical Sciences]},
  volume={292},
  publisher={Springer-Verlag, Berlin},
  date={1990},
  note={x+512},
}

\bib{KS96} {book}{
 author={Kashiwara, Masaki},
 author={Schapira, Pierre},
title={Moderate and formal cohomology associated with constructible sheaves},
series={M{\'e}moires Soc. Math. France},
volume={64},
note={76 pp},
date={1996}
}

\bib{KS01} {book}{
author={Kashiwara, Masaki},
 author={Schapira, Pierre},
title={Ind-Sheaves},
series={Ast\'erisque},
volume={271},
publisher={Soc. Math. France},
note={vi+136 pp},
date={2001},
}

\bib{Pe17}{article}{
author={Petit, Fran{\c c}ois},
title={Tempered subanalytic topology on algebraic varieties},
eprint={arXiv:1703.00870},
date={2017}
}

\bib{Si69}{article}{
author={Siu, Yum-Tong},
title={Noetherianness of rings of holomorphic functions on Stein compact subsets},
journal={Proc. Am. Math. Soc.},
volume={21},
year={1969},
pages={483--489}
}

\bib{Si76}{article}{
author={Siu, Yum-Tong},
title={Every Stein subvariety admits a Stein neighborhood},
journal={Invent. Math.},
volume={38},
pages={89--100},
year={1976}
}
 
 \bib{Sk20}{article}{
author={Skoda, Henri},
title={A Dolbeault lemma for temperate currents},
note={Personal communication and paper to appear},
year={2020}
}
 
 \bib{Sn99}{book}{
author={Schneiders, Jean-Pierre},
title={Quasi-abelian categories and sheaves},
 series={M{\'e}moires Soc. Math. France},
 volume={76}, 
 publisher={Soc.\ Math.\ France},
 date={1999}
}

\bib{Ta02}{book}{
author={Taylor, Joseph L.},
title={Several complex variables with connections to algebraic geometry and Lie groups},
series={Graduate Studies in Mathematics},
publisher={American Math. Soc.},
volume={46},
year={2002},
note={xvi +507 pp}
}

\end{biblist}
\end{bibdiv}

\noindent
\parbox[t]{21em}
{\scriptsize{
\noindent
Pierre Schapira\\
Sorbonne Universit{\'e}, CNRS IMJ-PRG\\
4 place Jussieu, 75252 Paris Cedex 05 France\\
e-mail: pierre.schapira@imj-prg.fr\\
http://webusers.imj-prg.fr/\textasciitilde pierre.schapira/
}}

\end{document}